\documentclass[11pt]{amsart}

\pdfoutput=1

\usepackage{mystyle}
\usepackage{a4wide}

\newcounter{ContinueCounter}

\begin{document}
\title[Blow-up of solutions of heat equation with convex source]{Blow-up of nonnegative solutions of an abstract semilinear heat equation with convex source}

\author{Daniel Lenz}
\address{Mathematisches Institut, Friedrich-Schiller-Universit\"at Jena, 07743 Jena, Germany}
\email{daniel.lenz@uni-jena.de}

\author{Marcel Schmidt}
\address{Mathematisches Institut, Friedrich-Schiller-Universit\"at Jena, 07743 Jena, Germany}
\email{schmidt.marcel@uni-jena.de}

\author{Ian Zimmermann}
\address{Mathematisches Institut, Friedrich-Schiller-Universit\"at Jena, 07743 Jena, Germany}
\email{ian.zimmermann@uni-jena.de}

\begin{abstract}
 \noindent
 We give a sufficient condition for non-existence of global nonnegative mild solutions of the Cauchy problem for the semilinear heat equation $u' = Lu + f(u)$ in $\Lp$ for $p \in [1,\infty)$, where $(X,m)$ is a $\sigma$-finite measure space, $L$ is the infinitesimal generator of a sub-Markovian strongly continuous semigroup of bounded linear operators in $\Lp$, and $f$ is a strictly increasing, convex, continuous function on $[0,\infty)$ with $f(0) = 0$ and $\int_1^\infty 1/f < \infty$.
 Since we make no further assumptions on the behaviour of the diffusion, our main result can be seen as being about the competition between the diffusion represented by $L$ and the reaction represented by $f$ in a general setting.
 We apply our result to Laplacians on manifolds, graphs, and, more generally, metric measure spaces with a heat kernel.
 In the process, we recover and extend some older as well as recent results in a unified framework.
\end{abstract}

\maketitle

\section{Introduction}
\label{introduction}

Consider the time evolution of temperature in a container in which an exothermic reaction takes place.
Let $\Omega \subseteq \IR^N$ be given and let $u(t,x)$ denote the temperature at time $t \geq 0$ and location $x \in \Omega$.
Assume that some initial heat distribution $a \colon \Omega \to \IR$ is given and the amount of heat produced at temperature $s \geq 0$ is given by $f(s) \geq 0$.
Then, the temperature dynamics in $\Omega$ can be modelled by the initial value problem
 \begin{equation}
  \label{eq_reaction-diffusion}
  \tag{RD}
  \begin{cases}
   u'(t,x) = \Delta u(t,x) + f(u(t,x)) &\text{for $t > 0$, $x \in \Omega$} \\
   u(0,x) = a(x) &\text{for $x \in \Omega$} 
  \end{cases}
  .
 \end{equation}

The partial differential equation in \eqref{eq_reaction-diffusion} is an example of a reaction-diffusion equation. 
In addition to heat dynamics, these equations are used to model population genetics and many other processes in biology and chemistry.
See \cite{fifeMathematicalAspectsReacting1979} for an extensive introduction to reaction-diffusion equations including historical remarks, further examples, and references.
In this paper, we will deal with the more specific situation where the function $f$ grows superlinearly.
For a more recent monograph with a focus on this case, see \cite{quittnerSuperlinearParabolicProblems2019}.

A natural question concerning the evolution of temperature in the system described above is whether the self-accelerating nature of the chemical reaction leads to an explosive increase in temperature.
In terms of the mathematical model, the question is whether there is a global solution of \eqref{eq_reaction-diffusion}, that is to say a solution which is defined on the entire time interval $\IRpz$.
The phenomenon where no such solution exists is known as blow-up and its occurrence in an abstract version of the initial value problem introduced above is the main concern of this text.

It is useful to view the issue of blow-up in \eqref{eq_reaction-diffusion} as a matter of competition between the reaction and the diffusion.
The intuition behind this is that the diffusion has the effect of carrying heat away from \enquote{hot spots}, whereas the reaction produces an especially large amount of heat at such locations.

We will now give a rough outline of the historical background of the problem studied in this paper.
For a broader sketch of the history of questions concerning blow-up in reaction-diffusion equations, see e.g.\@ the introduction of \cite{galaktionovProblemBlowupNonlinear2002}.

In 1966, Fujita \cite{fujitaBlowingSolutionsCauchy1966} published a paper studying the phenomenon of blow-up for nonnegative solutions of the initial value problem
 \[
  \begin{cases}
   u'(t,x) = \Delta u(t, x) + u(t,x)^{1 + \alpha} & \text{ for $t > 0$, $x \in \IR^N$} \\
   u(0,x) = a(x) & \text{ for all $x \in \IR^N$}
  \end{cases}
  ,
 \]
where $\alpha > 0$.
It turns out that if $\alpha N < 2$, then there is no global solution for any nontrivial nonnegative initial heat distribution.
On the other hand, if $\alpha N > 2$, then global solutions exist for all sufficiently small nonnegative initial values $a$.
Note that large values of $\alpha$ mean that the reaction is weak at temperatures near zero and large values of $N$ mean that the diffusion is strong.
These observations support the interpretation that the occurence of blow-up depends on the relative strength of the reaction and the diffusion.

For the case $\alpha = 0$, note that if $v$ solves the initial value problem for the ordinary heat equation with initial value $a$, then the function $(t,x) \mapsto e^t v(t,x)$ solves the initial value problem above.
Hence, blow-up does not occur in this case.
This may be explained by the fact that, although the reaction is strong at temperatures near zero, it is not strong enough at high temperatures.

In \cite{fujitaNonexistenceNonuniquenessTheorems1970}, this setting is generalized by replacing the power function $(\cdot)^{1+\alpha}$ with a more general class of functions, namely those which are convex, continuous and strictly increasing on $\IRpz$ while vanishing at 0.
One of the main results of that paper is that blow-up occurs if $\textfrac{1}{f}$ is integrable on $[r,\infty)$ for $r>0$ and the integral grows more slowly than $r^{-\nicefrac{2}{N}}$ as $r \to 0 +$ (see Theorem 2.2 in \cite{fujitaNonexistenceNonuniquenessTheorems1970}).
Note that the sufficient condition for blow-up from \cite{fujitaBlowingSolutionsCauchy1966} mentioned above is a consequence of this result.

This problem and related variants have been studied by many authors since.
The critical case $\alpha N = 2$ was shown to also lead to blow-up of nonnegative solutions in \cite{hayakawaNonexistenceGlobalSolutions1973} for $N \in \{1,2\}$ and in \cite{sugitaniNonexistenceGlobalSolutions1975} for general $N$.
In \cite{weisslerSemilinearEvolutionEquations1979}, \cite{weisslerLocalExistenceNonexistence1980}, and \cite{weisslerExistenceNonexistenceGlobal1981}, $L^p$ solutions of the initial value problem described above are studied.

Further, more recent examples of research concerning existence or non-existence of global or local solutions of the semilinear heat equation $u' = \Delta u + f(u)$ abound, with various assumptions regarding the function $f$ and set in a variety of spaces such as Euclidean space (e.g.\@ \cite{laisterNonexistenceLocalSolutions2013} or \cite{laisterBlowupDichotomySemilinear2020}, the latter studying a fractional Laplacian), sub-Riemannian manifolds (e.g.\@  \cite{ruzhanskyExistenceNonexistenceGlobal2018}), graphs (e.g.\@ \cite{linBlowupProblemsNonlinear2018}, \cite{wuBlowupSemilinearHeat2021}), fractals (e.g.\@ \cite{falconerNonlinearDiffusionEquations2001}), or metric measure spaces (e.g.\@ \cite{falconerInhomogeneousParabolicEquations2012}, \cite{gorkaParabolicFlowMetric2014}).
More generally, the literature on blow-up in parabolic problems is quite extensive as the survey papers \cite{levineRoleCriticalExponents1990}, \cite{dengRoleCriticalExponents2000}, \cite{bandleBlowupDiffusionEquations1998}, and \cite{galaktionovProblemBlowupNonlinear2002}, as well as the monograph \cite{quittnerSuperlinearParabolicProblems2019} and the references cited in each of these texts demonstrate.

The aforementioned paper \cite{linBlowupProblemsNonlinear2018} by Lin and Wu reproduces two blow-up results of \cite{fujitaNonexistenceNonuniquenessTheorems1970} in the setting of graphs.
The purpose of the present text is to take this line of thinking further by studying one of these results in a more abstract context.
Specifically, we examine the initial value problem for the abstract semilinear equation
 \[ u' = Lu + f(u) , \]
where $L$ is an unbounded linear operator on some $L^p$ space with $p \in [1,\infty)$ and $f$ is a strictly increasing, continuous, convex function on $[0,\infty)$ with
 \[ f(0)=0 \text{ and } \int_1^\infty \frac{1}{f(t)} \diff t < \infty . \]
The initial value is assumed to be nonnegative and nontrivial.
We derive a sufficient condition for blow-up of all nonnegative solutions.
In previous work on this semilinear heat equation, the diffusion is typically assumed a priori to satisfy certain estimates, for example in the form of polynomial volume growth of the underlying space.
Our main result does not require any such assumptions.
Instead, it depends only on a quantitative relation between the diffusion and the reaction (see Theorem \ref{thm_noglobalsolution_measurable_mean}).
In this sense, our result deals directly with the competition between reaction and diffusion in an abstract framework.
To the best of the authors' knowledge, such a formulation is new.
In addition, we would like to stress that our main result does not require the semigroup generated by $L$ to possess an integral kernel.

\skippingparagraph
The paper is organized as follows.
In Section \ref{sec_setting}, we begin with a precise definition of the initial value problem we wish to solve and the concept of solution used for this purpose.
%
Before discussing non-existence of global solutions, we briefly present a basic result on existence of local solutions in Section \ref{sec_existence}.
%
Section \ref{sec_nonexistence-continuous} is dedicated to our main results, namely three versions of a non-existence theorem for the initial value problem with which we are concerned.
Two of these variants can be applied in arbitrary $L^p$ spaces over $\sigma$-finite measure spaces, whereas the third one assumes that the underlying measure space is equipped with a topology.
%
Finally, in Section \ref{sec_applications}, we apply our abstract results to Laplacians on Riemannian manifolds, on graphs, and on metric measure spaces with a heat kernel.
In particular, we show that our result is a generalization of theorems from \cite{fujitaNonexistenceNonuniquenessTheorems1970} and \cite{linBlowupProblemsNonlinear2018}.
We also extend part of a theorem from \cite{falconerNonlinearDiffusionEquations2001}.

This article is based on the third-named author's master's thesis.

\section{Preliminaries}
\label{sec_setting}

Let $(X,m)$ be a $\sigma$-finite measure space and let $p \in [1,\infty)$ be given.
Let $L \colon D(L) \to \Lp$ be a closed linear operator in $\Lp$ generating a strongly continuous semigroup $S = (S(t))_{t \geq 0}$ of bounded linear operators in $\Lp$.
(For an introduction to the theory of strongly continuous operator semigroups, we refer to the monograph \cite{engelOneParameterSemigroupsLinear2000}.)
Additionally, let the semigroup $S$ be sub-Markovian, which means that for every $t > 0$ and every $\phi \in \Lp$ taking values in $[0,1]$ $m$-almost everywhere, the function $S(t)\phi$ also takes values in $[0,1]$ $m$-almost everywhere.
Let $\Lppos$ denote the subset of $\Lp$ consisting of the functions that are nonnegative $m$-almost everywhere.
Let $I$ be an interval of nonnegative real numbers with $0 \in I$.
Now, consider the \emph{initial value problem} for $u \colon I \to \Lppos$ given by
 \begin{equation*} \label{eq_ivp} \tag{IVP}
  \begin{cases}
   u'(t) = L(u(t)) + f(u(t)) \text{ for $t \in I$} \\
   u(0) = a \\
   u \geq 0
  \end{cases}
  ,
 \end{equation*}
where $a \in \Lppos$ is the \emph{initial value} and $f$ is a non-linear operator defined as follows.
Let a function $f: [0,\infty) \to [0,\infty)$ be given and, by a slight abuse of notation, let $f$ also denote the operator of composition with $f$ defined by
 \[\begin{split}
  D(f) &:= \set*{ \phi \in \Lppos : f \circ \phi \in \Lp }, \\
  f(\phi) &:= f \circ \phi .
 \end{split}\]
Additionally, we require the \emph{source term} $\fnctnomapping{f}{\IRpz}{\IRpz}$ to satisfy the following three conditions:
\begin{enumerate}[(f1)]
 \item The function $f$ is convex and continuous on $\IRpz$.
 \item The function $f$ satisfies $f(0) = 0$ and is strictly positive on $\IRp$.
 \item The integral $\int_1^\infty \textfrac{1}{f(s)} \diff s$ exists.
\end{enumerate}

\begin{remark}
 The limitation imposed on the integral of $\textfrac{1}{f}$ in (f3) is an example of a so-called Osgood type condition, introduced in \cite{osgoodBeweisExistenzLoesung1898}.
 The motivation for this condition in the present context stems from the well-known fact that the solution of the scalar ordinary differential equation $u' = f(u)$, which is essentially \eqref{eq_ivp} without the diffusion, blows up if and only if the integral $\int_1^\infty \textfrac{1}{f(s)} \diff s$ is finite.
 Let us add, however, that in \cite{laisterNonexistenceLocalSolutions2013} it is shown that this Osgood type condition is not necessary for blow-up to occur in the semilinear heat equation if the initial value is unbounded.
\end{remark}

\noindent
The function $F \colon (0,\infty) \to (0,\infty)$ defined by
 \[ F(t) := \int_t^\infty \frac{1}{f(s)} \diff s \]
plays an important role in our considerations.
The next lemma collects some properties of the functions $f$ and $F$ that will be used later on.

\begin{lemma} \label{notthm_fAndFAndOneOverf_properties}
 Let $f$ satisfy (f1)-(f3).
 Then the following statements are true.
 \begin{compactenum}[(i)]
  \item The function $f$ is locally Lipschitz continuous on $\IRpz$. \label{notthm_fAndFAndOneOverf_properties_locallyLipschitz}
  \item The function $F$ is strictly decreasing.
  \item The function $F \colon (0,\infty) \to (0,\infty)$ is bijective. \label{notthm_fAndFAndOneOverf_properties_FBijective}
  \item The function $F$ is continuously differentiable with derivative $-\textfrac{1}{f}$.
  \item For every $c>0$, the function $F$ is Lipschitz continuous on $[c,\infty)$ with Lipschitz constant $\textfrac{1}{f(c)}$. \label{notthm_fAndFAndOneOverf_properties_FLipschitz}
 \end{compactenum}
\end{lemma}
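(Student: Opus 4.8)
The plan is to verify the five statements essentially by direct computation, exploiting the convexity of $f$ together with the Osgood condition (f3). The properties about $F$ follow from elementary facts about integrals of positive continuous functions, so the genuine work lies in statement \eqref{notthm_fAndFAndOneOverf_properties_locallyLipschitz}, the local Lipschitz continuity of a convex function.

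First I would establish \eqref{notthm_fAndFAndOneOverf_properties_locallyLipschitz}. The key observation is that $f$ is convex on $\IRpz$, and a finite convex function on an open interval is automatically locally Lipschitz; the only subtlety is the boundary point $0$. To handle this cleanly, I would work on a fixed compact interval $[0,b]$ and use the standard three-slope inequality for convex functions: for $0 \le x < y \le b$, the difference quotient $(f(y)-f(x))/(y-x)$ is bounded above by the slope $(f(b+1)-f(b))/1$ and bounded below by $(f(1)-f(0))/1$ once we stay away from the right endpoint by a fixed margin. Concretely, I would fix points to the left and right of $[0,b]$ (e.g.\@ using $f$ at $b+1$) and invoke monotonicity of difference quotients to bound $|f(y)-f(x)|$ by a constant times $|y-x|$ on $[0,b]$. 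Since $b$ is arbitrary, this gives local Lipschitz continuity on all of $\IRpz$.

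Next I would dispatch the statements about $F$ in quick succession, all relying on the fundamental theorem of calculus. Since (f3) guarantees $\int_1^\infty 1/f < \infty$ and $1/f$ is continuous and strictly positive on $(0,\infty)$, the integral $F(t) = \int_t^\infty 1/f$ converges for every $t > 0$ and defines a finite positive number, so $F$ maps $(0,\infty)$ into $(0,\infty)$. For the differentiability statement, I would note that $1/f$ is continuous on $(0,\infty)$ by (f1) and (f2), so the fundamental theorem of calculus yields $F \in C^1$ with $F'(t) = -1/f(t)$. Because $F' = -1/f < 0$ throughout $(0,\infty)$, the function $F$ is strictly decreasing, which handles the monotonicity claim. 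For bijectivity \eqref{notthm_fAndFAndOneOverf_properties_FBijective}, strict monotonicity gives injectivity, and for surjectivity onto $(0,\infty)$ I would examine the two limits: as $t \to \infty$ the tail integral tends to $0$, and as $t \to 0+$ one shows $F(t) \to \infty$. The latter uses that $f$ is bounded near $0$ (being continuous with $f(0)=0$), so $1/f$ is not integrable near $0$; combined with continuity of $F$ and the intermediate value theorem, the image is all of $(0,\infty)$.

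Finally, statement \eqref{notthm_fAndFAndOneOverf_properties_FLipschitz} follows immediately from the derivative formula: for $t \ge c > 0$ we have $|F'(t)| = 1/f(t) \le 1/f(c)$ because $f$ is increasing by convexity together with (f2), so $1/f$ is decreasing. Hence $F$ is Lipschitz on $[c,\infty)$ with constant $1/f(c)$ by the mean value theorem. The main obstacle, if any, is the careful treatment of the boundary point $0$ in the Lipschitz estimate of \eqref{notthm_fAndFAndOneOverf_properties_locallyLipschitz} and the verification that $F(t) \to \infty$ as $t \to 0+$ in the surjectivity argument; both are standard but require invoking the boundedness of $f$ near $0$ rather than any growth hypothesis.
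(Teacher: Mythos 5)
There is a genuine gap in your surjectivity argument for (\ref{notthm_fAndFAndOneOverf_properties_FBijective}). You claim that $F(t)\to\infty$ as $t\to 0+$ because ``$f$ is bounded near $0$, so $1/f$ is not integrable near $0$.'' That inference is false: boundedness of $f$ near $0$ only gives a positive \emph{lower} bound on $1/f$, which is perfectly compatible with integrability on a bounded interval, and even $f(t)\to 0$ (hence $1/f(t)\to\infty$) does not force non-integrability --- consider $f(t)=\sqrt{t}$, for which $\int_0^1 t^{-1/2}\,dt = 2 < \infty$. That example is concave, which is exactly the point: the divergence of $\int_0^1 1/f$ is where convexity must be used, and you never invoke it. The paper closes this step by noting that convexity together with $f(0)=0$ gives $f(t)\le t f(1)$ for $t\in[0,1]$, hence $1/f(t)\ge 1/(t f(1))$, whose integral over $(0,1)$ diverges logarithmically; this is what yields $F(t)\to\infty$ and hence surjectivity of $F$ onto $(0,\infty)$. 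This is not a cosmetic issue, since the main theorem needs $F^{-1}$ to be defined on all of $(0,\infty)$.

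A smaller slip occurs in (\ref{notthm_fAndFAndOneOverf_properties_locallyLipschitz}): you assert that the difference quotients of $f$ on $[0,b]$ are bounded below by $(f(1)-f(0))/1$. The three-slope inequality produces lower bounds only from chords lying to the \emph{left} of the pair of points, and there is nothing to the left of $0$ in the domain; moreover the slope over $[0,1]$ is an upper bound, not a lower bound, for slopes over subintervals near $0$ (take $f(t)=t^2$, where the slope over $[0,0.1]$ is $0.1$ while $f(1)-f(0)=1$). The repair is immediate --- $f$ is nondecreasing (convexity, $f(0)=0$, $f\ge 0$), so all difference quotients are $\ge 0$ --- or one can extend $f$ by $0$ to the negative half-line and quote local Lipschitz continuity of convex functions on open intervals, which is the route the paper takes. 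Your treatment of (ii), (iv) and (v) is correct and is exactly what the paper dismisses as trivial.
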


\begin{proof}
 We briefly discuss statements (\ref{notthm_fAndFAndOneOverf_properties_locallyLipschitz}) and (\ref{notthm_fAndFAndOneOverf_properties_FBijective}) for the reader's convenience.
 The other claims are trivial.
 \begin{asparaenum}[(i)]
  \item[(\ref{notthm_fAndFAndOneOverf_properties_locallyLipschitz})]
  Continuing $f$ to the entire real line by choosing the value 0 for negative arguments, we obtain a convex function on $\IR$.
  Hence, this function is locally Lipschitz continuous on $\IR$ and, in particular, on $[0,\infty)$.
  
  \item[(\ref{notthm_fAndFAndOneOverf_properties_FBijective})]
  It is clear that $F$ is injective, continuous and satisfies $F(t) \to 0$, $t \to \infty$.
  Because $f$ is convex and satisfies $f(0)=0$, we have $f(t) \leq t f(1)$ for all $t \in [0,1]$.
  Thus, $F(t) \to \infty$, $t \to 0$, so that $F$ is also surjective. \qedhere
 \end{asparaenum}
\end{proof}

\begin{example} \label{example_powerfunction}
 Let $\alpha > 0$ and $f(t) = t^{1 + \alpha}$ for all $t \in [0,\infty)$.
 In this situation, we have 
  \[ F(t) = \frac{1}{\alpha t^\alpha} \quad \text{and} \quad F\inv(t) = \left( \frac{1}{\alpha t} \right)^{\nicefrac{1}{\alpha}} \]
 for every $t > 0$.
\end{example}

There is a variety of types of solution for an initial value problem like \eqref{eq_ivp}.
We work with the following concept.

\begin{definition}[Mild solution] \label{def_mildSolution}
 A function $u \colon I \to \Lppos$ is a \emph{mild solution of \eqref{eq_ivp} in $I$} if $u(t) \in D(f)$ for all $t \in I \mZ$, $f \circ u \in L^1((0,t]; \Lp)$ for every $t \in I \mZ$, and $u$ satisfies the variation of constants formula 
 \begin{equation} \label{eq_integraleq} \tag{IE}
   u(t) = S(t)a + \int_0^t S(t-s) f(u(s)) \diff s ,
 \end{equation}
 for every $t \in I$.
 If, furthermore, $I = [0,\infty)$, then we call $u$ a \emph{global solution} of \eqref{eq_ivp}.
\end{definition}

\begin{remarks}
 \begin{remarkscontent}{%
  The notion of a mild solution is motivated by the following observation.
Let $u \colon I \mZ \to \Lppos$ be continuously differentiable with $u(I \mZ) \subseteq D(L) \cap D(f)$, $u' = Lu + f(u)$, as well as $u(t) \to a$, $t \to 0+$.
If, in addition, $Lu$ is continuous on $I \mZ$ and $f(u) \in L^1(I \mZ; \Lp)$, then $u$ satisfies the variation of constants formula \eqref{eq_integraleq} for all $t \in I \mZ$.}
 
  \item
  This definition is sound, in the sense that the right side of \eqref{eq_integraleq} exists under the stated conditions, because the application of the strongly continuous semigroup $S$ preserves integrability on $(0,t]$ for fixed $t \in I$.
 Indeed, strong measurability of $S(t-\cdot)(f \circ u)(\cdot)$ follows from Proposition 1.1.28 in \cite{hytonenAnalysisBanachSpaces2016} and the integrability of $\norm{S(t-\cdot)(f \circ u)(\cdot)}_p$ on $(0,t]$ follows easily via the boundedness of $\norm{S}$ on $[0,t]$.
 
  \item
  Note that a mild solution is automatically continuous and satisfies $u(t) \to a$, as $ t \to 0+$.
 \end{remarkscontent}
\end{remarks}

\section{Existence of local solutions}
\label{sec_existence}

Before we come to our main results, which concern non-existence of global solutions to \eqref{eq_ivp}, let us briefly discuss the matter of existence of local solutions.
For this purpose, we introduce two properties that a mild solution of \eqref{eq_ivp} in an interval $I$ may possess.
We say that a solution is \emph{maximal} if it cannot be continued to an interval that is strictly larger than $I$.
Furthermore, we call a solution $u$ \emph{locally essentially bounded} if $u(t) \in \Linfty$ for every $t \in I$ and $\sup \{ \norm{u(t)}_\infty \mid t \in [0,T] \}$ is finite for every $T \in I$.
The following theorem guarantees the local existence of locally essentially bounded solutions under the assumption that the initial value is essentially bounded.
The theorem also shows that global existence of such solutions can only fail if they become unbounded in finite time.

\begin{theorem}
 \label{thm_LocalExistenceOfMildSolutions}
 
 For every $a \in \Lppos \cap \Linfty$ there exists $\Tmax \in (0, \infty]$ such that the system \eqref{eq_ivp} has a unique maximal locally essentially bounded mild solution $u$ on the interval $[0,\Tmax)$.
 If $\Tmax < \infty$, then we have 
  \[ \lim_{t \to \Tmax -} \norm{u(t)}_\infty = + \infty . \]
\end{theorem}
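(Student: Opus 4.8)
The plan is to construct a local solution via the Banach fixed point theorem, to read off uniqueness and the blow-up criterion from the fact that the length of the existence interval can be bounded below in terms of $\norm{a}_\infty$ alone, and finally to glue local solutions into a maximal one. First I would fix $a \in \Lppos \cap \Linfty$, set $R := \norm{a}_\infty + 1$, and let $L_R$ be a Lipschitz constant of $f$ on $[0,R]$, which exists by Lemma \ref{notthm_fAndFAndOneOverf_properties} since $f$ is locally Lipschitz; because $f(0)=0$ this gives $0 \le f(\sigma) \le L_R \sigma$ for $\sigma \in [0,R]$. For $T > 0$ to be chosen, I would work in the complete metric space
\[
 E := \set*{ u \in C([0,T];\Lp) : 0 \le u(t) \text{ and } \norm{u(t)}_\infty \le R \text{ for all } t \in [0,T] },
\]
with the metric induced by the sup-norm of $C([0,T];\Lp)$; $E$ is closed because both constraints pass to $\Lp$-limits along almost everywhere convergent subsequences. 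On $E$ I define $\Phi$ by the right-hand side of \eqref{eq_integraleq}. For $u \in E$ one has $0 \le f(u(s)) \le L_R u(s) \in \Lp$, so the integrand is well defined, and joint continuity of $(\tau,w) \mapsto S(\tau)w$ together with the Lipschitz bound on $f$ shows $s \mapsto S(t-s)f(u(s))$ is continuous, whence $\Phi(u) \in C([0,T];\Lp)$.

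Next I would check that $\Phi$ maps $E$ into itself and contracts for small $T$. Positivity of $\Phi(u)(t)$ follows since $S$ preserves nonnegativity (a consequence of the sub-Markovian property) and $f \ge 0$. For the $\Linfty$-bound, sub-Markovianity gives $\norm{S(\tau)w}_\infty \le \norm{w}_\infty$ for nonnegative $w \in \Lp \cap \Linfty$, so that $\norm{S(t-s)f(u(s))}_\infty \le f(R)$; approximating the $\Lp$-valued Bochner integral by Riemann sums, which are nonnegative and bounded by $t f(R)$ in $\Linfty$, and passing to an almost everywhere convergent subsequence yields $\norm{\Phi(u)(t)}_\infty \le \norm{a}_\infty + T f(R)$, so $\Phi(E) \subseteq E$ once $T f(R) \le 1$. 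For the contraction estimate, using $\norm{S(\tau)}_{p \to p} \le M$ on $[0,T]$ and $\norm{f(u(s)) - f(v(s))}_p \le L_R \norm{u(s)-v(s)}_p$ (valid since $u(s),v(s)$ take values in $[0,R]$) gives $\sup_t \norm{\Phi(u)(t)-\Phi(v)(t)}_p \le T M L_R \sup_s \norm{u(s)-v(s)}_p$. Choosing $T =: \tau(R)$ with $\tau(R) \le \min\{ 1/f(R),\, 1/(2 M L_R)\}$ makes $\Phi$ a contraction, and its unique fixed point is a locally essentially bounded mild solution on $[0,\tau(R)]$; crucially, $\tau(R)$ depends on $a$ only through $\norm{a}_\infty$.

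Uniqueness on any common interval then follows from the same Lipschitz estimate and Grönwall's inequality: two locally essentially bounded mild solutions $u,v$ are both bounded by some $R'$, so $\norm{u(t)-v(t)}_p \le M L_{R'} \int_0^t \norm{u(s)-v(s)}_p \diff s$, forcing $u = v$. By uniqueness all local solutions are mutually consistent, and taking $\Tmax$ to be the supremum of their existence times produces a unique maximal locally essentially bounded mild solution on $[0,\Tmax)$.

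Finally, for the blow-up alternative I would argue by contradiction, exploiting that \eqref{eq_ivp} is autonomous. Suppose $\Tmax < \infty$ but $\norm{u(t)}_\infty \not\to +\infty$; then there is a sequence $t_n \uparrow \Tmax$ with $\sup_n \norm{u(t_n)}_\infty =: K < \infty$. Solving \eqref{eq_ivp} with initial value $u(t_n)$ yields, by the local existence step, a solution on an interval of length at least $\tau(K+1)$, a quantity depending only on $K$. Using the semigroup property $S(t-t_n)S(t_n-s) = S(t-s)$ and a change of variables, this solution translated by $t_n$ concatenates with $u$ to give a locally essentially bounded mild solution on $[0, t_n + \tau(K+1)]$; for $n$ large enough that $t_n + \tau(K+1) > \Tmax$ this contradicts maximality, so $\lim_{t \to \Tmax-}\norm{u(t)}_\infty = +\infty$. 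The step I expect to require the most care is the $\Linfty$ control of the Bochner integral, reconciling the $\Lp$-valued integral with the $\Linfty$-bound from sub-Markovianity, since it is exactly this bound that makes the local existence time depend only on $\norm{a}_\infty$ and thereby drives the continuation argument.
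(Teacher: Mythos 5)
Your proposal is correct and follows essentially the same route as the paper, which only sketches the argument: a Banach fixed point iteration on the variation of constants formula in the style of Pazy's Theorem 6.1.4, with the key adjustment being exactly the one you exploit, namely that the composition operator $f$ is Lipschitz in the $p$-norm on sets bounded in the essential supremum norm, which is why the $\Linfty$-control of the Bochner integral via sub-Markovianity is needed. Your filled-in details (the closed set $E$, the Riemann-sum argument for the $\Linfty$ bound, Grönwall for uniqueness, and the continuation argument with existence time depending only on $\norm{a}_\infty$) are all sound and consistent with what the paper intends.
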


\begin{proof}[Sketch of proof]
 A proof based on a fixed point argument can be obtained by following the proof of Theorem 1.4 in Chapter 6 of \cite{pazySemigroupsLinearOperators1983} and making some adjustments.
 The necessity of alterations is due to the fact that, in the present context, the operator of composition with $f$ is not Lipschitz continuous on bounded sets in $\Lppos$.
 This problem can be circumvented using the obvservation that, due to the local Lipschitz continuity of $f$ as a function on $[0,\infty)$, the operator $f$ on $\Lppos$ is Lipschitz continuous with respect to the $p$-norm on sets that are bounded with respect to the essential supremum norm.
 This is the reason why we make the assumption of the solutions being locally essentially bounded.
\end{proof}

\section{Non-existence of global solutions}
\label{sec_nonexistence-continuous}

\subsection{Measurable setting}

In this section, we state and prove our main result.

\begin{theorem} \label{thm_noglobalsolution_measurable_mean}
 Let $a \in \Lp$ be nonnegative.
 Assume that there exists a time $T > 0$ and a measurable subset $G$ of $X$ with $m(G) \in \IRp$ such that
  \[ \frac{1}{m(G)} \int_G S(T)a \diff m > F\inv (T) . \]
 Then there is no global mild solution of the system \eqref{eq_ivp}.
 More precisely, $T$ is an upper bound on the existence time of a mild solution of \eqref{eq_ivp}.
\end{theorem}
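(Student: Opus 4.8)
The plan is to collapse the abstract equation to a scalar differential inequality for a suitable spatial average of the solution, and then read off blow-up from the function $F$. Suppose, towards a contradiction, that a mild solution $u$ of \eqref{eq_ivp} exists on some interval containing $[0,T]$, and introduce the \emph{transported average}
\[ \phi(t) := \frac{1}{m(G)} \int_G S(T-t)\, u(t) \diff m, \qquad t \in [0,T], \]
which is meaningful since $m(G) < \infty$ gives $\mathbf 1_G \in L^{p'}$. First I would apply the bounded operator $S(T-t)$ to the variation of constants formula \eqref{eq_integraleq}, using the semigroup law $S(T-t)S(t-s) = S(T-s)$ and the fact that $S(T-t)$ commutes with Bochner integration, to obtain $S(T-t)u(t) = S(T)a + \int_0^t S(T-s) f(u(s)) \diff s$. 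Averaging over $G$ then yields
\[ \phi(t) = \phi(0) + \int_0^t h(s) \diff s, \qquad h(s) := \frac{1}{m(G)} \int_G S(T-s) f(u(s)) \diff m \ge 0, \]
where $\phi(0) = \frac{1}{m(G)} \int_G S(T)a \diff m$ is precisely the quantity in the hypothesis. Since $f \circ u \in L^1((0,t];\Lp)$ and the averaging functional is bounded, $h$ is locally integrable, so $\phi$ is absolutely continuous, nondecreasing, and satisfies $\phi' = h$ almost everywhere.

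The decisive step will be the pointwise estimate $\phi' \ge f(\phi)$. The idea is that averaging $S(T-s)(\cdot)$ over $G$ is integration against a sub-probability measure: writing $\int_G S(T-s)g \diff m = \int_X g \diff\nu_s$, the measure $\nu_s$ with density $S(T-s)^\ast \mathbf 1_G$ (the adjoint acting on $\mathbf 1_G \in L^{p'}$) is nonnegative because $S$, and hence $S^\ast$, is positivity preserving, and exhausting $X$ by sets $X_n$ of finite measure together with $0 \le S(T-s)\mathbf 1_{X_n} \le 1$ gives $\nu_s(X) \le m(G)$. Thus $\mu_s := \nu_s / m(G)$ is a sub-probability measure with $\phi(s) = \int_X u(s) \diff\mu_s$ and $h(s) = \int_X f(u(s)) \diff\mu_s$. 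Because $f$ is convex with $f(0) = 0$, I would complete $\mu_s$ to a probability measure by assigning the deficient mass to the value $0$ and apply Jensen's inequality, which gives $h(s) \ge f(\phi(s))$ and hence $\phi'(s) \ge f(\phi(s))$ for almost every $s$.

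To finish, I would feed this inequality into $F$. Since $\phi$ is nondecreasing with $\phi(t) \ge \phi(0) =: \phi_0$ and, by hypothesis, $\phi_0 > F\inv(T) > 0$, the function $F$ is Lipschitz on $[\phi_0,\infty)$ and continuously differentiable with $F' = -1/f$ (Lemma \ref{notthm_fAndFAndOneOverf_properties}); consequently $F \circ \phi$ is absolutely continuous and
\[ (F \circ \phi)'(t) = -\frac{\phi'(t)}{f(\phi(t))} \le -1 \quad \text{for a.e.\ } t, \]
so that $F(\phi(t)) \le F(\phi_0) - t$ on $[0,T]$. Strict monotonicity of $F$ turns $\phi_0 > F\inv(T)$ into $F(\phi_0) < T$; choosing $t \in (F(\phi_0), T]$ would force $F(\phi(t)) < 0$, contradicting that $F$ takes values in $(0,\infty)$. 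Hence $u$ cannot exist up to time $T$, which is the asserted bound on the existence time.

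The main obstacle I anticipate is the Jensen step of the second paragraph: one must argue carefully that the averaging operator is represented by a genuine sub-probability measure in the $\sigma$-finite (possibly infinite measure) setting, using only the stated sub-Markovian property, and then handle the deficient mass via $f(0)=0$. The remaining difficulties are bookkeeping: justifying that $S(T-t)$ may be pulled through the Bochner integral, and verifying the absolute continuity of $F \circ \phi$ together with the almost-everywhere chain rule for the merely absolutely continuous function $\phi$.
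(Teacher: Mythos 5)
Your argument is correct and follows the same skeleton as the paper's proof: transport the solution by $S(T-t)$, average over $G$, derive the scalar differential inequality $\phi' \ge f(\phi)$ for the resulting absolutely continuous, nondecreasing function, and integrate $1/f$ to contradict $F(\phi(0)) < T$. The one genuinely different ingredient is your treatment of the Jensen step. The paper splits it in two: first the operator-valued Jensen inequality $f(S(t)\psi) \le S(t)f(\psi)$ for sub-Markovian semigroups (Lemma \ref{notthm_jensenlikeinequalityformarkovianoperators}, quoted from Haase), then the classical Jensen inequality for the normalized measure $\frac{1}{m(G)}m|_G$. You instead represent the composite functional $g \mapsto \frac{1}{m(G)}\int_G S(T-s)g \, \mathrm{d}m$ as integration against a single sub-probability measure with density $S(T-s)^{*}\mathbf{1}_G/m(G)$, bound its total mass by exhaustion and sub-Markovianity, and apply scalar Jensen once, absorbing the mass deficit via $f(0)=0$ and convexity. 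This buys self-containedness --- no appeal to the operator Jensen inequality --- at the cost of some duality bookkeeping, which you handle correctly (for $p=1$ the adjoint acts on $L^\infty$, where $\mathbf{1}_G$ still lives, and positivity of $S^{*}$ follows from positivity of $S$). Your concluding step, $F(\phi(t)) \le F(\phi_0) - t$ forcing $F$ to become negative, is the same computation as the paper's $F(j(0)) - F(j(T)) \ge T$; both rest on the a.e.\ chain rule for a Lipschitz function composed with an absolutely continuous one, which the paper justifies by citing Serrin--Varberg and which you correctly flag as the remaining point to verify.
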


\begin{remarks}
 \begin{remarkscontent}
 {The statement of the theorem can be interpreted from the point of view of temperature dynamics sketched in the introduction.
 First, the term $u_T := \frac{1}{m(G)} \int_G S(T)a \diff m$ is the average temperature in $G$ at time $T$ that would be the result of the diffusion acting without interference from the reaction on the initial heat distribution $a$.
 Hence the theorem says that blow-up occurs if this average temperature is large compared to a value that depends on the reaction term.
 Another possible interpretation is based on the inequality $F(u_T) < T$, which is equivalent to the condition in the theorem due to the monotonicity of $F$.
 This inequality states, roughly speaking, that $F(u_T)$ is small.
 Now, this term, which is equal to $\int_{u_T}^\infty 1/f(t) \diff t$, is small if and only if $f$ is large on $[u_T,\infty)$.
 Therefore, the theorem says that blow-up occurs if the reaction is strong at temperatures higher than the one that would be produced by the diffusion alone.}
 
 \item
 The proof of Theorem \ref{thm_noglobalsolution_measurable_mean} shares the same basic structure as the proofs of similar theorems in special cases such as $X = \IR^N$.
 However, we wish to highlight a remarkable feature of our result.
 Previous results concerning the blow-up of solutions of \eqref{eq_ivp} with convex source term are based on prior knowledge about the behaviour of the semigroup $S$ in the given setting and require a corresponding assumption on the behaviour of the source term $f$.
 Theorem \ref{thm_noglobalsolution_measurable_mean}, on the other hand, makes no such assumptions and depends only on a direct relation between the diffusion and the reaction.
 
 \item
 We do not assume that the semigroup has an integral kernel.
 However, in many applications the semigroup does have such a kernel.
 In these cases, the task of finding the kind of lower bound on the semigroup described in Theorem \ref{thm_noglobalsolution_measurable_mean} can be accomplished via lower bounds on the integral kernel.
 This is discussed in detail in Section \ref{sec_applications}.
 \end{remarkscontent}
\end{remarks}

The proof of Theorem \ref{thm_noglobalsolution_measurable_mean} requires some preparations.
The variation of constants formula used in the definition of mild solutions is based on the fact that, if $u$ is differentiable with $u' = Lu + f(u)$, then the function $S(t - \cdot)u$ is differentiable with derivative $S(t - \cdot)(f \circ u)$.
As we will see in Proposition \ref{notthm_differentiatingJ_mildSolution}, the latter is still true, although in a weaker sense, if $u$ is a mild solution, even though $u$ itself need not be differentiable in this case.
Before we discuss this, we introduce names for the two functions just mentioned since they will both appear rather often in the rest of this section.
Given a mild solution $u$ of \eqref{eq_ivp} in $I$ and $T \in I$, define
 \[ \fnct{J_T}{[0,T]}{\Lp}{t}{S(T-t)u(t)} , \]
and
 \[ \fnct{K_T}{(0,T]}{\Lp}{t}{S(T-t)f(u(t))} . \]
We will often work with a fixed value of $T$ so that the index can usually be safely omitted without the possibility of confusion.
Thus, we will sometimes simply write $J$ or $K$ instead of $J_T$ or $K_T$.
Furthermore, the notation does not reflect the dependence on $u$ as it will always be clear which mild solution is under consideration.
 
Recall that a function $\fnctnomapping{v}{U}{\Lp}$, where $U \subseteq \IR$ is an interval, is called \emph{absolutely continuous} if for every $\eps > 0$ there exists a $\delta > 0$ such that for all $n \in \IN$ and disjoint intervals $(c_1,d_1), \ldots, (c_n,d_n)$ contained in $U$ with $\sum_{i=1}^n (d_i - c_i) < \delta$, we have $\sum_{i=1}^n \norm{v(d_i) - v(c_i)}_p < \eps$.

\begin{proposition}
 \label{notthm_differentiatingJ_mildSolution}
 Let $u$ be a mild solution of \eqref{eq_ivp} in $I$ and let $T \in I$ be given.
 \begin{enumerate}[(a)]
 \item
 The function $J_T$ is absolutely continuous and for almost every $t \in [0,T]$, $J_T$ is differentiable in $t$ with $J_T'(t) = K_T(t)$.
 
 \item
 For $s,t \in [0,T]$ with $s \leq t$, we have $J_T(s) \leq J_T(t)$ $m$-almost everywhere on $X$.
 
 \end{enumerate}
\end{proposition}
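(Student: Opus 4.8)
The plan is to reduce both assertions to a single clean representation of $J_T$ as the sum of a constant and an indefinite Bochner integral of $K_T$. First I would insert the variation of constants formula \eqref{eq_integraleq} for $u(t)$ into the definition $J_T(t) = S(T-t)u(t)$ and apply $S(T-t)$. Since $S(T-t)$ is a bounded linear operator it commutes with the Bochner integral, and the semigroup law gives $S(T-t)S(t-s) = S(T-s)$ for $0 \le s \le t$. This yields, for every $t \in [0,T]$,
\begin{equation*}
 J_T(t) = S(T)a + \int_0^t S(T-s) f(u(s)) \diff s = S(T)a + \int_0^t K_T(s) \diff s .
\end{equation*}
Everything else follows from this identity, so most of the content lies in establishing it carefully.

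Before differentiating I would check that $K_T \in L^1((0,T]; \Lp)$. Strong measurability follows from the same argument cited in the remarks after Definition \ref{def_mildSolution} (Proposition 1.1.28 in \cite{hytonenAnalysisBanachSpaces2016}), and integrability of $\norm{K_T(\cdot)}_p$ follows from $f \circ u \in L^1((0,T]; \Lp)$ together with the uniform bound $\sup_{r \in [0,T]} \norm{S(r)} < \infty$ coming from strong continuity. For part (a), the indefinite Bochner integral $t \mapsto \int_0^t K_T(s)\diff s$ of an $L^1$ function is absolutely continuous, and by the Lebesgue differentiation theorem for Bochner integrals it is differentiable at almost every $t$ with derivative $K_T(t)$. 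Adding the constant $S(T)a$ affects neither property, so $J_T$ is absolutely continuous with $J_T'(t) = K_T(t)$ for almost every $t$.

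For part (b), I would subtract the representation at $s$ from the representation at $t$ to obtain, for $s \le t$,
\begin{equation*}
 J_T(t) - J_T(s) = \int_s^t S(T-r) f(u(r)) \diff r .
\end{equation*}
The integrand is nonnegative in $\Lp$: indeed $f(u(r)) \ge 0$ because $f$ takes values in $\IRpz$ and $u(r) \ge 0$, and $S(T-r)$ preserves positivity. The latter I would deduce from the sub-Markovian property by a truncation argument: for $\phi \in \Lppos$ and $c > 0$ the function $\min(\phi,c)/c$ takes values in $[0,1]$, so $S(T-r)\min(\phi,c) \ge 0$, and letting $c \to \infty$ with the closedness of the cone $\Lppos$ gives $S(T-r)\phi \ge 0$. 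Since the Bochner integral of an integrable function that is $\Lppos$-valued almost everywhere again lies in the closed convex cone $\Lppos$, I conclude $J_T(t) - J_T(s) \ge 0$, that is, $J_T(s) \le J_T(t)$ $m$-almost everywhere.

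I expect the only genuinely delicate points to be the two appeals to vector-valued measure theory, namely commuting the bounded operator $S(T-t)$ with the Bochner integral and the fundamental theorem of calculus for Bochner integrals, together with the verification that sub-Markovianity upgrades to positivity preservation. Each of these is standard, so I anticipate no serious obstacle once the representation of $J_T$ is in hand.
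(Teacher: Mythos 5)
Your proposal is correct and follows essentially the same route as the paper: both derive the identity $J_T(t) - J_T(s) = \int_s^t K_T(r)\,\mathrm{d}r$ from the variation of constants formula and the semigroup law, then obtain (a) from the integrability of $K_T$ and the Lebesgue differentiation theorem for Bochner integrals, and (b) from the nonnegativity of the integrand. You merely spell out details the paper leaves implicit (commuting $S(T-t)$ with the Bochner integral, and the truncation argument upgrading sub-Markovianity to positivity preservation), all of which are sound.
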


\begin{remark}
 Part (a) of the proposition remains true even if the space $\Lp$ and the composition operator $f$ are replaced with an arbitrary Banach space and non-linear operator, respectively, and the concept of mild solutions is adjusted accordingly.
 Part (b), on the other hand, depends on the order structure of $\Lp$.
\end{remark}

\begin{proof}
 We begin with the proof of (a).
 Since $u$ is a mild solution, a simple calculation yields the equality
  \[ J(t+h) - J(t) = \int_t^{t+h} K(s) \diff s \]
 for all $t \in I$ and $h \in \IR$ with $t+h \in I$.
 This formula and the integrability of $K$ easily yield the absolute continuity of $J$.
 Applying Lebesgue's differentiation theorem for Bochner integrals demonstrates the claim regarding differentiability (see e.g.\@ Corollary 2.3.5 in \cite{hytonenAnalysisBanachSpaces2016}).
 
 Let us turn to (b).
 For every $T \in I \mZ$ and $\tau \in [0,T]$, the function $K_T(\tau)$ on $X$ is nonnegative $m$-almost everywhere.
 Hence for all $s,t \in [0,T]$ with $s<t$, the Bochner integral $\int_s^t K_T(\tau) \diff \tau$ is also nonnegative $m$-almost everywhere.
 Thus, the formula for $J(t) - J(s)$ given above immediately yields the claim.
\end{proof}

\begin{lemma}[Jensen inequality] \label{notthm_jensenlikeinequalityformarkovianoperators}
 Let $\phi \in D(f)$.
 Then for all $t \geq 0$, $S(t)\phi \in D(f)$ and
  \[ f(S(t)\phi) \leq S(t) f(\phi) . \]
\end{lemma}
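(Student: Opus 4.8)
The plan is to prove the operator inequality $f(S(t)\phi) \leq S(t)f(\phi)$ by exploiting convexity of $f$ through its supporting lines. The key structural fact is that a convex function is the supremum of the affine functions lying below it: for $f$ convex and continuous on $\IRpz$, we have
\[
 f(r) = \sup_{(\alpha,\beta) \in A} (\alpha r + \beta) \quad \text{for all } r \geq 0,
\]
where $A$ is the (countable, by separability) family of pairs $(\alpha,\beta)$ with $\alpha,\beta$ rational such that $\alpha s + \beta \leq f(s)$ for all $s \geq 0$. Since $f(0)=0$ and $f \geq 0$, each such supporting line satisfies $\beta \leq 0$ and $\alpha \geq 0$. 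The first step is to fix one affine minorant $\alpha s + \beta \leq f(s)$ and apply it pointwise to $\phi$, giving $\alpha \phi + \beta \leq f(\phi)$ $m$-almost everywhere.

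Next I would apply the semigroup $S(t)$ to this inequality. Because $S$ is sub-Markovian, it is positivity preserving (a function taking values in $[0,1]$ is mapped to one taking values in $[0,1]$, and by scaling and limiting arguments nonnegative functions are mapped to nonnegative functions), so $S(t)$ is monotone: $g \leq h$ implies $S(t)g \leq S(t)h$. Applying this to $\alpha\phi + \beta \leq f(\phi)$ and using linearity of $S(t)$ yields
\[
 \alpha\, S(t)\phi + \beta\, S(t)\mathbf{1} \leq S(t)f(\phi).
\]
The sub-Markovian property also gives $S(t)\mathbf{1} \leq \mathbf{1}$; since $\beta \leq 0$ this implies $\beta\, S(t)\mathbf{1} \geq \beta\,\mathbf{1}$, so we obtain $\alpha\, S(t)\phi + \beta \leq S(t)f(\phi)$ $m$-almost everywhere. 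Here a technical subtlety requires care: $\mathbf{1}$ need not lie in $\Lp$ on an infinite measure space, so the meaning of $S(t)\mathbf{1} \leq \mathbf{1}$ must be made precise. I would handle this by working on sets of finite measure (using $\sigma$-finiteness) or by approximating $\mathbf{1}$ from below by $\Lp$-functions $\mathbf{1}_{X_n}$ with $X_n \uparrow X$, passing to the limit with the aid of the sub-Markovian bound $0 \leq S(t)\mathbf{1}_{X_n} \leq 1$ and monotone convergence.

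Having established $\alpha\, S(t)\phi + \beta \leq S(t)f(\phi)$ for every supporting pair $(\alpha,\beta)$, I take the supremum over the countable family $A$. The crucial observation is that by the supporting-line representation,
\[
 \sup_{(\alpha,\beta)\in A}\big(\alpha\,(S(t)\phi)(x) + \beta\big) = f\big((S(t)\phi)(x)\big)
\]
for $m$-almost every $x$, since $S(t)\phi \geq 0$ almost everywhere (positivity preservation). Because $A$ is countable, the supremum of the inequalities holds almost everywhere simultaneously, giving $f(S(t)\phi) \leq S(t)f(\phi)$ pointwise $m$-almost everywhere. Finally, to conclude $S(t)\phi \in D(f)$, I note that $f(S(t)\phi)$ is nonnegative and dominated by $S(t)f(\phi) \in \Lp$ (valid since $\phi \in D(f)$ means $f(\phi)\in\Lp$), so $f \circ (S(t)\phi) \in \Lp$, which is exactly the membership condition for $D(f)$.

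The main obstacle I expect is the careful treatment of the constant function $\mathbf{1}$ and the term $S(t)\mathbf{1}$ on an infinite measure space, where $\mathbf{1} \notin \Lp$ in general. The inequality $S(t)\mathbf{1} \leq \mathbf{1}$ is morally the defining feature of sub-Markovianity, but extracting it rigorously from the stated definition (which is phrased in terms of $[0,1]$-valued $\Lp$-functions) requires an approximation argument together with $\sigma$-finiteness. Everything else — the supporting-line representation, positivity and monotonicity of $S(t)$, and the countable supremum — is routine once this point is secured.
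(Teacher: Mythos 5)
Your argument is correct, but it takes a genuinely different route from the paper: the paper does not prove this lemma at all, it simply cites it as a special case of Theorem 3.4 in Haase's \emph{Convexity inequalities for positive operators} (the reference \cite{haaseConvexityInequalitiesPositive2007}), which establishes Jensen-type inequalities for sub-Markovian operators in considerable generality. Your supporting-line proof is the standard direct argument and is essentially self-contained: represent $f$ as the supremum of a countable family of affine minorants $\alpha s+\beta$ (which works here because $f$ is convex, continuous, and finite on $[0,\infty)$, so the supremum equals $f$ even at the boundary point $0$, where the minorant $s\mapsto f'_+(0)s$ already attains $f(0)=0$), push each minorant through $S(t)$ using positivity and linearity, and take the countable supremum. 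What the citation buys the paper is brevity and access to a more general statement; what your proof buys is transparency and independence from the reference. The one point you rightly flag --- the meaning of $S(t)\mathbf{1}\leq\mathbf{1}$ when $\mathbf{1}\notin L^p$ --- can be handled by your exhaustion argument, but it can also be sidestepped entirely: rewrite the pointwise minorant inequality as $\alpha\phi-f(\phi)\leq-\beta$ with $-\beta\geq 0$; the left-hand side is an $L^p$ function bounded above by the constant $-\beta$, and sub-Markovianity (applied to the positive part divided by $-\beta$) gives $S(t)\bigl(\alpha\phi-f(\phi)\bigr)\leq-\beta$, which is exactly $\alpha S(t)\phi+\beta\leq S(t)f(\phi)$ without ever invoking $S(t)\mathbf{1}$. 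Your concluding domination step, $0\leq f(S(t)\phi)\leq S(t)f(\phi)\in L^p$, correctly yields $S(t)\phi\in D(f)$.
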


\begin{proof}
 This is a special case of Theorem 3.4 in \cite{haaseConvexityInequalitiesPositive2007}. 
\end{proof}

After these preparations we can now present the proof of our main result.

\begin{proof}[Proof of Theorem \ref{thm_noglobalsolution_measurable_mean}]
 Assume, in order to obtain a contradiction, that $u$ is a mild solution of \eqref{eq_ivp} in $[0,T]$.
 Let $\hat{m} := \frac{1}{m(G)} m$ and define the function $j \colon  [0,T] \to \IR$ by
  \[ j(t) := \int_G J_T(t) \diff \hat{m} = \int_G S(T-t)u(t) \diff \hat{m} . \]
 First, note that the function $j$ inherits the monotonicity of $J_T$ and satisfies
  \[ j(0) = \int_G S(T)a \diff \hat{m} > F\inv (T) > 0 . \]
 In particular, $j$ is strictly positive.
 Now, the conditions of the theorem and the fact that $F$ is strictly decreasing imply
  \[ F(j(0)) = F \ropar*{\int_G S(T)a \diff \hat{m}} < F \left( F\inv (T) \right) = T . \]
 We will obtain a contradiction by showing that these conditions also imply
  $ F(j(0)) \geq T . $
  
 It follows from Proposition \ref{notthm_differentiatingJ_mildSolution} that $j$ is absolutely continuous with
  \[ j'(t) = \int_G K_T(t) \diff \hat{m} \]
 for almost all $t \in [0,T]$.
 Using both Lemma \ref{notthm_jensenlikeinequalityformarkovianoperators} and the usual Jensen inequality for integrals of real-valued functions with respect to a probability measure, we obtain
  \[ j'(t) = \int_G S(T-t)f(u(t)) \diff \hat{m} \geq \int_G f\left(S(T-t)u(t)\right) \diff \hat{m} \geq f\ropar*{\int_G J_T(t) \diff \hat{m}} = f(j(t)) \]
 for almost all $t \in [0,T]$.
 
 Since $j$ is a monotonically increasing function on a compact interval, it takes values in a compact interval $C$ with $\min(C) > F\inv(T)$.
 In particular, by Lemma~\ref{notthm_fAndFAndOneOverf_properties}~(\ref{notthm_fAndFAndOneOverf_properties_FLipschitz}), the restriction $F|_C$ of $F$ to $C$ is Lipschitz continuous.
 This implies that the three functions $j$, $F|_C$, and $F|_C \circ j$ are all absolutely continuous.
 Thus, by Theorem 2 in \cite{serrinGeneralChainRule1969}, we obtain the chain rule
  \( (F \circ j)' = (F' \circ j) \cdot j' \text{ almost everywhere on $[0,T]$.} \)
 Therefore, the fundamental theorem of calculus for absolutely continuous functions implies
  \[ F(j(0)) > F(j(0)) - F(j(T)) = - \int_0^T (F \circ j)'(t) \diff t = \int_0^T \frac{j'(t)}{f(j(t))} \diff t \geq T , \]
 where the last step follows via the inequality $j'(t) \geq f(j(t))$ deduced above.
\end{proof}

Obviously, a pointwise lower bound on $S(t)a$ on a measurable set $G$ with $m(G) \in (0,\infty)$ immediately implies a lower bound on the mean value of $S(t)a$ on $G$.
Hence, using the $\sigma$-finiteness of the measure space to also include the case $m(G) = \infty$, we immediately obtain the following corollary of Theorem \ref{thm_noglobalsolution_measurable_mean}.

\begin{corollary} \label{thm_noglobalsolution_measurable_pointwise}
 Let $a \in \Lp$ be nonnegative.
 Assume that there is a time $T > 0$ and a measurable subset $G$ of $X$ with $m(G) > 0$ such that
  \[ (S(T)a)(x) > F\inv (T) \text{ for $m$-almost every $x \in G$} . \]
 Then there is no global mild solution of the system \eqref{eq_ivp}.
 More precisely, $T$ is an upper bound on the existence time of a mild solution of \eqref{eq_ivp}.
\end{corollary}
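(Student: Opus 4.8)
The plan is to reduce the statement to Theorem \ref{thm_noglobalsolution_measurable_mean} by extracting, from the pointwise lower bound, a set of finite positive measure on which the \emph{mean} of $S(T)a$ strictly exceeds $F\inv(T)$. The reduction splits into two cases according to whether $m(G)$ is finite.

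First I would treat the case $m(G) \in (0,\infty)$. Here $G$ itself is admissible for Theorem \ref{thm_noglobalsolution_measurable_mean}, so it remains only to verify the mean-value inequality. Since $(S(T)a)(x) - F\inv(T) > 0$ for $m$-almost every $x \in G$ and $m(G) > 0$, the integral $\int_G \bigl( S(T)a - F\inv(T) \bigr) \diff m$ is strictly positive, because a nonnegative function that is strictly positive almost everywhere on a set of positive measure has strictly positive integral. Dividing by $m(G)$ gives $\frac{1}{m(G)} \int_G S(T)a \diff m > F\inv(T)$, which is precisely the hypothesis of Theorem \ref{thm_noglobalsolution_measurable_mean}.

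For the case $m(G) = \infty$, I would pass to a suitable subset using $\sigma$-finiteness. Writing $X = \bigcup_{n} X_n$ with $m(X_n) < \infty$, we have $G = \bigcup_n (G \cap X_n)$, and since $m(G) > 0$ there is some $n$ with $m(G \cap X_n) > 0$. Setting $G' := G \cap X_n$, we obtain a measurable set with $m(G') \in (0,\infty)$ on which the pointwise bound $(S(T)a)(x) > F\inv(T)$ still holds $m$-almost everywhere. Applying the first case to $G'$ then yields the conclusion.

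In both cases the non-existence of a global mild solution, as well as the bound $T$ on the existence time, follows at once from Theorem \ref{thm_noglobalsolution_measurable_mean}. I do not expect any genuine obstacle; the only point requiring a little care is deducing the strict mean-value inequality from the strict pointwise inequality, which rests on the elementary fact about integrals of almost-everywhere-positive functions recalled above.
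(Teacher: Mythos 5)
Your proposal is correct and follows exactly the argument the paper itself uses: for $m(G)$ finite the strict pointwise bound yields the strict mean bound required by Theorem \ref{thm_noglobalsolution_measurable_mean}, and for $m(G)=\infty$ one passes to a subset of finite positive measure via $\sigma$-finiteness. Nothing is missing.
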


\subsection{Topological setting}
\label{subsec_topologicalSetting}

So far, we have only assumed $X$ to have the structure of a measure space.
The special case where $X$ additionally carries a topology and the measure $m$ is compatible with that topology is of particular interest.

Let $X$ be a Hausdorff topological space and $m$ a $\sigma$-finite measure on the Borel sets of $X$ with full support, i.e.\@ such that all non-empty open sets have strictly positive measure.
Write $C(X)$ for the continuous real-valued functions on $X$.
If an equivalence class $\phi$ in $\Lp$ has a continuous representative, then the latter is unique due to the measure having full support.
Thus, we can and will identify the continuous representative with the class itself.

In order to be able to make use of the topological structure on $X$ in our analysis of \eqref{eq_ivp}, we make an additional assumption concerning the operator $L$ to ensure that it is in some way compatible with the topology.
To be more precise, we assume that the semigroup associated with $L$ satisfies the following property.

\begin{enumerate}
 \item[(C)] The semigroup $S$ maps into $C(X)$, i.e.\@ for every $\phi \in \Lp$ and all $t > 0$ the function $S(t) \phi$ is continuous.
\end{enumerate}

\noindent
In this situation, we obtain another corollary of Theorem \ref{thm_noglobalsolution_measurable_mean}.
As before, let $p \in [1,\infty)$ be given and let $S$ be a strongly continuous, sub-Markovian semigroup on $\Lp$, now additionally satisfying (C).
Let $f$ be as above and recall that $F(t) = \int_t^\infty \textfrac{1}{f(s)} \diff s$ for every $t > 0$.
 
\begin{corollary} \label{thm_noglobalsolution_continuous}
 Let $a \in \Lp$ be nonnegative.
 Assume that there is a time $T > 0$ and a point $\xi \in X$ such that
  \[ (S(T)a) (\xi) > F\inv(T) . \]
 Then there is no global mild solution of the system \eqref{eq_ivp}.
 More precisely, $T$ is an upper bound on the existence time of a mild solution of \eqref{eq_ivp}.
\end{corollary}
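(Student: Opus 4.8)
The plan is to reduce the claim to Corollary~\ref{thm_noglobalsolution_measurable_pointwise} by using the two topological hypotheses to convert the pointwise condition at the single point $\xi$ into a condition on a set of positive measure. A single point typically has measure zero, so the hypothesis $(S(T)a)(\xi) > F\inv(T)$ cannot be fed directly into any of the measurable-setting results; the work is entirely in manufacturing a positive-measure set on which the strict inequality persists.

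First I would invoke property (C) to note that $S(T)a$ is continuous on $X$. Since the inequality at $\xi$ is strict, it survives on a whole neighborhood: the set
\[ G := \set*{ x \in X : (S(T)a)(x) > F\inv(T) } \]
is the preimage of the open ray $(F\inv(T), \infty)$ under the continuous function $S(T)a$, hence open, and it contains $\xi$, hence is non-empty. Next I would use that $m$ has full support to conclude $m(G) > 0$, since $G$ is a non-empty open set. By construction, $(S(T)a)(x) > F\inv(T)$ holds for \emph{every} $x \in G$, in particular for $m$-almost every $x \in G$.

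At this stage the hypotheses of Corollary~\ref{thm_noglobalsolution_measurable_pointwise} are met with this $G$ and the same time $T$, so that corollary yields both the non-existence of a global mild solution and the assertion that $T$ bounds the existence time, completing the argument.

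The only genuine subtlety is precisely the step that bridges the gap between a pointwise statement at one point and a statement of positive measure; this is exactly the role of the two standing assumptions in the topological setting. Continuity of the semigroup, encoded in (C), ensures the strict inequality opens into a neighborhood, while the full-support assumption on $m$ guarantees that this neighborhood is not invisible to the measure. Neither hypothesis alone suffices, and I expect no further obstacle beyond correctly citing these two facts.
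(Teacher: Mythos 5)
Your proof is correct and follows essentially the same route as the paper: continuity from (C) turns the strict pointwise inequality at $\xi$ into an inequality on a non-empty open set, full support of $m$ gives that set positive measure, and Corollary~\ref{thm_noglobalsolution_measurable_pointwise} finishes the argument. The only cosmetic difference is that you take $G$ to be the full open superlevel set rather than merely some open neighbourhood of $\xi$, which changes nothing.
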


\begin{proof}
 Since $S(T)a$ is continuous, there exists an open neighbourhood $U$ of $\xi$ such that $S(T)a(x) > F\inv(T)$ for all $x \in U$.
 As the measure $m$ has full support, we have $m(U) > 0$.
 Thus, Corollary \ref{thm_noglobalsolution_measurable_pointwise} yields the claim.
\end{proof}

\begin{remark}
 If $S$ is a self-adjoint, positivity improving semigroup on $\Ltwo$, where $X$ is a locally compact, separable metric space and $m$ is a Radon measure with full support, then the condition (C) implies existence of an integral kernel (see \cite[Proposition 3.3]{kellerNoteBasicFeatures2015}).
\end{remark}

\section{Applications}
\label{sec_applications}

\subsection{Laplacians on Riemannian manifolds}
\label{sec_applications_manifolds}

In this section, we apply our non-existence result to \eqref{eq_ivp} in the case where $X$ is a weighted Riemannian manifold and $L$ is the associated Laplacian.
For a detailed introduction to the subject of Laplacians on manifolds we refer to the monograph \cite{grigoryanHeatKernelAnalysis2009}.

Let $(X,g,m)$ be a connected weighted Riemannian manifold and let $\fnctnomapping{w}{X}{\IRp}$ be the density of $m$ with respect to the Riemannian volume induced by $g$.
This density function $w$ is smooth by definition.
Furthermore, note that $m$ is a $\sigma$-finite measure on the Borel sets of $X$ with full support.
Let $\ggradient$ and $\divergence$ denote the gradient and the divergence on $(X,g,m)$, respectively.
Then the so-called \emph{weighted Laplacian} on $(X,g,m)$ is defined by
 \[ \fnct{\mulaplaciandelta}{\CinftyX}{\CinftyX}{\phi}{\frac{1}{w} \divergence (w \ggradient \phi)} . \]
Its restriction to $\CcinftyX$ 
can be viewed as a symmetric operator in the Hilbert space $\Ltwo$.
Furthermore, by Green's formula, this restriction is a non-positive operator.
Hence, it has a Friedrichs extension and this extension is a self-adjoint operator in $\Ltwo$.
We denote it by $\mulaplaciandelta$ again.
Using spectral theory, the semigroup $(e^{t \mulaplaciandelta})_{t \geq 0}$ generated by the non-positive self-adjoint operator $\mulaplaciandelta$ can be defined.
It is well-known that this semigroup is sub-Markovian.
In particular, it can be extended to $\Lp$ for every $p \in [1, \infty)$.
This allows us to define the Laplacian in $\Lp$ as the infinitesimal generator of the corresponding heat semigroup.
Additionally, $(e^{t\Delta})_{t \geq 0}$ maps into the smooth functions on $X$ and therefore satisfies the continuity property (C) from Section \ref{subsec_topologicalSetting}.
However, we will not use the topological version of our main result from Corollary \ref{thm_noglobalsolution_continuous} here.
For our purposes, it is sufficient to note that we can perform pointwise evaluation of functions after applying the semigroup.

In summary, the operator $\mulaplaciandelta$ satisfies all of the preconditions required for the applicability of Corollary \ref{thm_noglobalsolution_measurable_pointwise} to the system
 \begin{equation*}
  \begin{cases}
   u' = \mulaplaciandelta u + f(u) \\
   u(0) = a \\
   u \geq 0
  \end{cases}
  ,
 \end{equation*}
where $f$ is a function satisfying (f1)-(f3) and $a \in \Ltwo$ is a nonnegative, nontrivial initial value.
If we make an assumption with respect to the function $f$ that yields an upper bound on $F\inv$, then a corresponding lower bound for the heat semigroup $(e^{t \mulaplaciandelta})_{t \geq 0}$ is the only missing component to apply our non-existence result.

As is well-known, the semigroup $(e^{t \mulaplaciandelta})_{t \geq 0}$ has a heat kernel, which is to say that there exists a family $(p_t)_{t > 0}$ of functions $X \times X \to \IRp$ such that $p_t$ is measurable for all $t > 0$ and we have
 \[ e^{t \mulaplaciandelta} \phi (x) = \int_X p_t(x,y) \phi(y) \diff m(y) \]
for all $\phi \in \Ltwo$, $t > 0$, and $x \in X$.
In this situation, it is possible to derive lower bounds on $e^{t \mulaplaciandelta} \phi$ from lower bounds on $p_t$.
Therefore, we will now turn to a discussion of heat kernel estimates.

First let us introduce some notation for balls with respect to the geodesic metric $d$ on $X$. For all $r \geq 0$, $x \in X$, we write $B_r(x)$ for the closed ball of radius $r$ centered at $x$ and we set
 \[ \calV_r(x) := m(B_r(x)) . \] 

\begin{definition} \label{def_gaussian-estimates}
 The heat kernel is said to satisfy \emph{Gaussian estimates} if there exist positive constants $c$, $C$, $\tilde{c}$, $\tilde{C}$ such that
    \[ \frac{c}{\calV_{\sqrt{t}}(x)} \exp\left(- \frac{d(x,y)^2}{C t}\right) \leq p_t(x,y) \leq \frac{\tilde{c}}{\calV_{\sqrt{t}}(x)} \exp\left(- \frac{d(x,y)^2}{\tilde{C} t}\right) \]
 for all $t > 0$, $x,y \in X$.
 
 If only the first inequality holds, we say that the heat kernel satisfies \emph{Gaussian lower bounds}.
\end{definition}

\begin{remarks}
 \begin{remarkscontent}
 {It is well-known that for a geodesically complete Riemannian manifold $X$, the following three properties are equivalent:\@
  \begin{inparaenum}[(1)]
   \item the validity of Gaussian estimates,
   \item the Riemannian manifold $X$ possessing the volume doubling property combined with the validity of the Poincaré inequality, and
   \item the validity of the parabolic Harnack inequality.
  \end{inparaenum}
 See Theorem 3.1 in the survey paper \cite{saloff-costeHeatKernelIts2010} for more details and references.}
 
 \item
 Examples of manifolds satisfying the characterization mentioned in (a) include complete Riemannian manifolds with nonnegative Ricci curvature, convex domains in Euclidean space, and complements of such convex domains.
 See Section 3.3 in \cite{saloff-costeHeatKernelIts2010} for further examples.
 
 \item
 We are actually only interested in lower bounds for the heat kernel, but results giving only Gaussian lower bounds are not well understood (compare Section 5 in \cite{grigoryanHeatKernelsManifolds2001}).
 \end{remarkscontent}
\end{remarks}

Having the heat kernel satisfy Gaussian lower bounds is not very helpful unless we also know something about the volumes of balls.
If, for example, $f$ behaves like a power function near zero, in a precise sense to be described below, then we may obtain the kind of inequality between diffusion and reaction that we need by assuming that $X$ has at most polynomial volume growth.

\begin{definition} \label{def_at-most-polynomial-volume-growth}
 For $\theta > 0$, we say that $X$ has \emph{at most polynomial volume growth of degree $\theta$} if there exists a point $x \in X$ and positive real constants $d$ and $r_0$ such that for all $r > r_0$ we have
  \[ \calV_r(x) \leq d r^\theta . \]
\end{definition}

Note that, even though we have only required polynomial volume growth at a single point above, the triangle inequality implies that the condition then holds at every point of $X$.
While the constant $d$ will generally not be independent of $x$, its value can be chosen uniformly for all $x$ from any given bounded subset of $X$.

\begin{theorem}
 \label{thm_corollary_non-existenceForManifolds}
 Let $(X,g,m)$ be a connected weighted Riemannian manifold such that the associated heat kernel satisfies Gaussian lower bounds.
 Furthermore, let the manifold have at most polynomial volume growth of degree $\theta$.
 Assume that there are constants $\kappa, \gamma > 0$ such that the function $F$ defined by $F(t) = \int_t^\infty 1/f(s) \diff s$ satisfies $F(\textfrac{1}{t}) \leq \kappa \cdot t^\gamma$ for all sufficiently large $t$.
 Suppose that we have $\theta \gamma < 2$.
 Then for all nontrivial initial values $a \in \Lppos$ the system
  \begin{equation*}
  \begin{cases}
   u' = \mulaplaciandelta u + f(u) \\
   u(0) = a
  \end{cases}
  ,
 \end{equation*}
 has no global $\Lp$-valued mild solution.
\end{theorem}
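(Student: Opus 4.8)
The plan is to verify the hypotheses of Corollary \ref{thm_noglobalsolution_measurable_pointwise} at a suitable large time $T$ on a fixed ball $G$: I will show that polynomial volume growth together with the Gaussian lower bound forces $S(T)a$ to be of order at least $T^{-\theta/2}$ on $G$, while the assumption on $F$ forces $F\inv(T)$ to be of order at most $T^{-1/\gamma}$, whereupon $\theta\gamma < 2$ makes the former dominate for large $T$. As a first step I translate the hypothesis on $F$ into a bound on $F\inv$: writing $r = F\inv(T)$ (so that $F(r) = T$ and $r \to 0+$ as $T \to \infty$) and setting $r = 1/t$, the assumption $F(1/t) \le \kappa t^\gamma$ gives $T \le \kappa r^{-\gamma}$, and hence $F\inv(T) \le \kappa^{1/\gamma} T^{-1/\gamma}$ for all sufficiently large $T$.

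Next I estimate the diffused datum from below. Since $a$ is nontrivial and nonnegative, I fix $\xi_0 \in X$ and $R > 0$ with $A := \int_{B_R(\xi_0)} a \diff m > 0$ and set $G := B_R(\xi_0)$, so that $m(G) > 0$. Using the heat kernel representation of $S(T) = e^{T \mulaplaciandelta}$ on $\Lp$, the nonnegativity of $p_T$ and $a$, and the Gaussian lower bound, for every $x \in G$ I get
\[ (S(T)a)(x) \ge \int_{B_R(\xi_0)} p_T(x,y) a(y) \diff m(y) \ge \frac{c}{\calV_{\sqrt{T}}(x)} \exp\!\left( -\frac{4R^2}{C T} \right) A , \]
since $d(x,y) \le 2R$ for $x,y \in B_R(\xi_0)$. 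For $T$ large the exponential exceeds $\tfrac12$, and by the polynomial volume growth, whose constant may be taken uniform over the bounded set $G$ as remarked after Definition \ref{def_at-most-polynomial-volume-growth}, one has $\calV_{\sqrt{T}}(x) \le d' T^{\theta/2}$ for all $x \in G$ and all large $T$. Hence $(S(T)a)(x) \ge c' T^{-\theta/2}$ on $G$ for some $c' > 0$ and all large $T$.

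Comparing the two bounds, it suffices to arrange $c' T^{-\theta/2} > \kappa^{1/\gamma} T^{-1/\gamma}$, i.e.\@ $c'/\kappa^{1/\gamma} > T^{\theta/2 - 1/\gamma}$. Because $\theta\gamma < 2$ the exponent $\theta/2 - 1/\gamma$ is negative, so the right-hand side tends to $0$ and the inequality holds for all sufficiently large $T$. Fixing such a $T$ yields $(S(T)a)(x) > F\inv(T)$ for every $x \in G$, and Corollary \ref{thm_noglobalsolution_measurable_pointwise} then excludes a global mild solution. I expect the conceptual content to be exactly this — polynomial volume growth converts the Gaussian lower bound into a polynomial-in-$T$ lower bound for $S(T)a$, and $\theta\gamma < 2$ is precisely what lets the reaction outpace it — so that the only real work lies in the bookkeeping: choosing the volume-growth constant uniformly in $x \in G$ so that the denominator $\calV_{\sqrt{T}}(x)$ and the Gaussian factor are controlled simultaneously, and noting that the kernel representation used in the lower bound is valid for $\Lp$ data and not merely for $\Ltwo$ data.
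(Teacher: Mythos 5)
Your argument is correct and is essentially the paper's own proof: fix a ball, convert the Gaussian lower bound plus uniform polynomial volume growth on that ball into $(S(T)a)(x)\geq c'T^{-\theta/2}$, convert the hypothesis on $F$ into $F\inv(T)\leq \kappa^{1/\gamma}T^{-1/\gamma}$, and let $\theta\gamma<2$ decide the comparison before invoking Corollary \ref{thm_noglobalsolution_measurable_pointwise}. The only cosmetic difference is that the paper obtains the lower bound on $S(T)a$ via Fatou's lemma applied to the integral over all of $X$, whereas you restrict to a ball carrying positive mass of $a$ and bound the Gaussian factor by $\tfrac12$ directly; both yield the same estimate.
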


\begin{remark}
 Recall that for $f(t) = t^{1+\alpha}$ we have $F(1/t) = (1/\alpha)t^\alpha$.
 Moreover, saying that $F(1/t)$ is not too large for large values of $t$ means that $f(t)$ is not too small for small values of $t$.
 Thus, the upper bound on $F(1/t)$ can be interpreted as saying that, for temperatures near 0, the reaction is at least as strong as one described by a power function with exponent $1 + \gamma$.
\end{remark}

\begin{proof}
 Let $B \subseteq X$ be a ball of strictly positive radius.
 Choose $T>0$ large enough so that
  \( F\ropar{\ropar{\textfrac{\kappa}{t}}^{1/\gamma}} \leq \kappa \ropar{\ropar{\textfrac{t}{\kappa}}^{1/\gamma}}^\gamma = t \)
 and $\calV_{\sqrt{t}}(x) \leq d t^{\theta/2}$ for all $t \geq T$, $x \in B$, where $d$ is a positive constant depending only on $B$.
 This implies $(\kappa / t)^{1/\gamma} \geq F\inv(t)$ and
  \[ p_t(x,y) \geq \frac{c}{\calV_{\sqrt{t}}(x)} \exp \left(- \frac{d(x,y)^2}{Ct}\right) \geq \frac{c'}{t^{\theta/2}} \exp \left(- \frac{d(x,y)^2}{Ct}\right) \]
 for all $t \geq T$, $x \in B$, $y \in X$, where $c' := c / d$ is again a strictly positive constant.
 From this lower bound for the heat kernel and Fatou's lemma, it follows that
  \[ \liminf_{t \to \infty} \, t^{\theta/2} e^{t\mulaplaciandelta} a (x) \geq \liminf_{t \to \infty} \int_{X} c' \exp \left(- \frac{d(x,y)^2}{Ct}\right) a(y) \diff m(y) \geq c' \int_X a \diff m \]
 for all $x \in B$.
 Hence, choosing an arbitrary constant $C \in (0, c' \norm{a}_1)$, on $B$ we have $e^{t\mulaplaciandelta} a \geq C/t^{\theta/2}$ for all sufficiently large values of $t$.
 Without loss of generality, assume that this inequality holds for all $t \geq T$.

 The preceding considerations yield that the validity of the inequality $\textfrac{C}{t^{\theta/2}} >  (\kappa / t)^{1/\gamma}$ for some $t \geq T$ is sufficient to obtain $e^{t\mulaplaciandelta} a (x) > F\inv(t)$ for all $x \in B$.
 The former inequality is equivalent to \[t^{\theta \gamma / 2 - 1} < C^\gamma / \kappa.\]
 As $\theta \gamma < 2$, this inequality is indeed true for sufficiently large values of $t$, regardless of the precise values of the strictly positive constants on the right side. 
 Therefore, Corollary \ref{thm_noglobalsolution_measurable_pointwise} yields the claim.
\end{proof}

\begin{remarks}
\begin{remarkscontent}{%
 Apart from the fact that we consider solutions in $\Lp$ rather than solutions in the sense of distributions, the preceding theorem is a generalized version of Theorem 2.2 from \cite{fujitaNonexistenceNonuniquenessTheorems1970}.}
 
 \item
 Assume the situation of Theorem \ref{thm_corollary_non-existenceForManifolds}.
 If, additionally, the manifold $X$ is compact, then it has polynomial volume growth of degree $\theta$ for every $\theta > 0$.
 Thus, blow-up occurs regardless of the value of $\gamma$.
 Moreover, since the volume of $X$ is bounded in this case, a slight modification of the arguments used in the proof yields that $e^{T\mulaplaciandelta} a$ is bounded from below by a strictly positive constant for sufficiently large values of $T$.
 Therefore, blow-up in fact occurs for every function $f$ satisfying (f1)-(f3) because this automatically entails $F\inv(t) \to 0$, $t \to \infty$.
 These considerations apply, for example, to compact manifolds with nonnegative Ricci curvature.
 To the best of our knowledge, this result is new.
 
 \item \label{rem_other-growth-laws}
 Let us emphasize the flexibility of our abstract theorem regarding the relation between reaction and diffusion:
 Results analogous to Theorem \ref{thm_corollary_non-existenceForManifolds} could easily be obtained by assuming different conditions on the growth of the term $F(1/t)$ and the lower bound for the semigroup.
 Assume, for example, that the source term $f$ is given by $f(t) = t^2/e^{1/t}$.
 Then we have $F(1/t) = e^t - 1$ and $F\inv(t) \leq 1/\ln(t)$ for $t > 1$.
 Thus, we obtain a sufficient condition for blow-up if there is a time $t > 1$ such that $e^{t\Delta}a(x) > 1/\ln(t)$ for all $x$ contained in some ball $B \subseteq X$.
 This condition is met, for example, if the heat kernel satisfies Gaussian lower bounds and the volume growth of the manifold $X$ is at most logarithmic (with suitable constant factors).
 On the other hand, if we consider $f(t) = e^t - 1$, then we have $F(1/t) = 1/t - \ln(e^{1/t} - 1)$ for all $t > 0$ and, furthermore, for every fixed $\epsilon > 0$ we have $F\inv(t) \leq e^{- t/(1+\epsilon)}$ for all sufficiently large $t$.
 Accordingly, we obtain blow-up if there is a sufficiently large $t$ such that $e^{t\Delta}a(x) > e^{- t/(1+\epsilon)}$ for all elements $x$ of a fixed ball $B$.
 
\end{remarkscontent}
\end{remarks}

\subsection{Laplacians on graphs}
\label{sec_applications_graphs}

We will now discuss another class of examples to which the non-existence theorems from Section \ref{sec_nonexistence-continuous} can be applied, namely Laplacians on graphs.
The following introduction to the latter follows the presentation found, for example, in \cite{haeselerLaplaciansInfiniteGraphs2012} and \cite{kellerDirichletFormsStochastic2012}.

Let $X$ be a countable set.
We call a map $\fnctnomapping{b}{X \times X}{\IRpz}$ a \emph{graph over $X$} if it satisfies the following:
\begin{inparaenum}[(1)]
 \item $b(x,x) = 0$ for every $x \in X$,
 \item $b(x,y) = b(y,x)$ for all $x, y \in X$, and
 \item $\sum_{y \in X} b(x,y) < \infty$ for every $x \in X$.
\end{inparaenum}
We call the elements of $X$ \emph{vertices}.
Two vertices $x$ and $y$ are called \emph{adjacent} or \emph{neighbours} if $b(x,y) > 0$.
We say that there is a \emph{path} connecting the vertices $x$ and $y$ if there exist a natural number $n$ and vertices $\xntoxm{x}{1}{n}$ such that $x_1 = x$, $x_i$ and $x_{i+1}$ are adjacent for every $i \in \oneto{n-1}$, and $x_n = y$.
In this case, $n - 1$ is the \emph{length of the path}.
A graph is called \emph{connected} if any two vertices are connected by a path.
On a connected graph, the \emph{combinatorial distance}, given by the shortest possible length of a path connecting two vertices, defines a metric.
Finally, a graph is called \emph{locally finite} if every vertex has only finitely many neighbours.

Next, we want to define the Laplacian associated with a graph.
For this purpose, we first discuss measures on graphs.
We call a map $\fnctnomapping{m}{X}{\IRp}$ a measure of full support on $X$.
Such a function can be identified with a measure on the power set $\powerset{X}$ of $X$ via
 \( m(A) = \sum_{x \in A} m(x) \)
for $A \subseteq X$.
If $m$ is a measure of full support on $X$, then we call $(X,m)$ a \emph{discrete measure space}.
Given a discrete measure space $(X,m)$, we may consider the usual Lebesgue spaces $\lp$, where $p \in [1,\infty]$.
Furthermore, if $(X,m)$ is a discrete measure space and $b$ is a graph over $X$, we also say that $b$ is a graph over $(X,m)$.

We are now ready to define graph Laplacians.
Let $b$ be a graph over $(X,m)$.
The so-called \emph{generalized} or \emph{formal Laplacian}, denoted $\calL$, is defined on the set
 \[ \calF := \autosetmid{\fnctnomapping{\phi}{X}{\IR}}{\sum_{y \in X} b(x,y)\abs{\phi(y)} < \infty \text{ for every $x \in X$}} \]
and its action is given by
 \[ \calL\phi (x) := \frac{1}{m(x)} \sum_{y \in X} b(x,y) (\phi(x) - \phi(y)) . \]
 
Form methods could be used at this point to introduce possibly unbounded restrictions of this formal Laplacian to the $\ell^p$ spaces in a general framework.
However, the lower bound on the heat kernel that we will use below assumes the geometry of the graph to be bounded.
We say that the geometry of the graph $b$ over $(X,m)$ is bounded if the \emph{weighted degree function} $\Deg$ given by
 \[ \fnct{\Deg}{X}{\IRpz}{x}{\frac{1}{m(x)}\sum_{y \in X} b(x,y)}, \]
is bounded.
But this is equivalent to $\lp$ being contained in $\calF$ for any $p \in [1,\infty]$ and the restriction of $\calL$ to $\ell^p$ being a bounded operator (see Theorem 9.3 in \cite{haeselerLaplaciansInfiniteGraphs2012}).
Thus, we proceed as follows.

Let $p \in [1,\infty)$ and let $L$ denote the restriction of $\calL$ to $\lp$.
Then $L$ is a bounded operator generating a strongly continuous semigroup $(e^{-t L})_{t \geq 0}$ via
 \( e^{-tL} := \sum_{k=0}^\infty \textfrac{(-tL)^k}{k!} \) for all $t \geq 0$.
Note that the sign here reflects the fact that $L$ is a nonnegative operator in the case $p=2$.
It is well-known that this semigroup is sub-Markovian.
Furthermore, it has a heat kernel, i.e.\@ a family $(p_t)_{t \geq 0}$ of functions $X \times X \to [0,\infty)$ satisfying
 \[ e^{-tL} \phi (x) = \sum_{y \in X} p_t(x, y) \phi(y) m(y) \]
for every $t > 0$, $\phi \in \lp$ and $x \in X$.
Indeed, in this particular setting, we can simply define the heat kernel via $p_t(x,y) = (\textfrac{1}{m(y)}) e^{-tL} \1_y(x)$ for all $t > 0$, $x,y \in X$, where $\1_y$ denotes the indicator function of the set $\set{y}$.
 
Our goal for this section is to apply our non-existence results, all three of which coincide in the setting of a discrete space, to the system
 \begin{equation*}
  \begin{cases}
   u' = L u + f(u) \\
   u(0) = a \\
   u \geq 0
  \end{cases}
  ,
 \end{equation*}
where $f$ is a function satisfying (f1)-(f3) such that $\int_{1/t}^\infty 1/f(s) \diff s \leq \kappa t^\gamma$ for large $t$ and $a \in \lp$ is a nonnegative, nontrivial initial value.
The only prerequisite we are lacking for this purpose is a lower bound on the heat semigroup of a graph.
Since graphs are discrete structures, an on-diagonal lower bound for the heat kernel is sufficient here.
The lemma below describes sufficient conditions for such an on-diagonal lower bound on the heat kernel of a graph.
Note that on-diagonal lower bounds for the heat kernel typically depend on some upper bound on the volume growth of the space $X$.
This is true for manifolds (see \cite{coulhonOndiagonalLowerBounds1997} or Theorem 2.3 in \cite{saloff-costeHeatKernelIts2010}) as well as graphs.
The definition of at most polynomial volume growth from Definition \ref{def_at-most-polynomial-volume-growth} can be carried over verbatim for connected graphs equipped with the combinatorial distance.

\begin{lemma} \label{thm_heatkernelondiagonallowerboundgraphs_wu}
 Let $b$ be a locally finite, connected graph over a discrete measure space $(X,m)$.
 Let $\inf \{m(x) \mid x \in X \} > 0$ and let the weighted degree function $\Deg$ be bounded.
 Furthermore, let the volume growth of the graph be at most polynomial of degree $\theta$.
 Then there exists a $t_0 > 0$ and a constant $c > 0$ such that the inequality
  \[ p_t(x, x) \geq \frac{c}{(\sqrt{t} \log t)^\theta} \]
 holds for all $x \in X$ and $t > t_0$.
\end{lemma}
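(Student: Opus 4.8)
The plan is to combine the elementary Cauchy--Schwarz on-diagonal lower bound with a quantitative escape estimate, in the spirit of the method of Coulhon and Grigor'yan \cite{coulhonOndiagonalLowerBounds1997}. The starting point is that, since $\Deg$ is bounded, $L$ is a bounded operator which is self-adjoint in $\ell^2(X,m)$, so its heat kernel is symmetric, $p_t(x,y) = p_t(y,x)$. The semigroup law together with the Cauchy--Schwarz inequality then gives, for any $r > 0$,
\begin{equation*}
 p_{2t}(x,x) = \sum_{y \in X} p_t(x,y)^2 m(y) \geq \frac{1}{\calV_r(x)} \left( \sum_{y \in B_r(x)} p_t(x,y) m(y) \right)^2 .
\end{equation*}
Thus the task reduces to bounding $\calV_r(x)$ from above, which the polynomial volume growth hypothesis supplies, and to bounding from below the mass $\sum_{y \in B_r(x)} p_t(x,y) m(y)$ of the semigroup inside the ball $B_r(x)$.

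For the in-ball mass I would use conservativeness. Since $\calL \1 = 0$ and $L$ is bounded, $e^{-tL}\1 = \1$, so $\sum_{y \in X} p_t(x,y) m(y) = 1$ for all $x$ and $t$. It therefore suffices to show that the escape mass $\sum_{y \notin B_r(x)} p_t(x,y) m(y)$ is at most $1/2$ for a suitable radius $r = r(t)$. Here I would invoke a Gaussian off-diagonal upper bound of Carne--Varopoulos type, which holds on any graph by spectral considerations and whose prefactor is controlled using $\inf_x m(x) > 0$; schematically $p_t(x,y) \leq C_0 \exp(-d(x,y)^2/(C_1 t))$ in the range $d(x,y) \leq t$ (the contribution of $d(x,y) > t$ being governed by the faster Poisson-type tail and hence negligible). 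Summing over distance shells and using the polynomial volume bound $\calV_k(x) \leq d\, k^\theta$ yields a constant $C$ with
\begin{equation*}
 \sum_{y \notin B_r(x)} p_t(x,y) m(y) \leq C \sum_{k > r} e^{-k^2/(C_1 t)}\, k^\theta .
\end{equation*}
Choosing $r = A \sqrt{t}\, \log t$ with $A$ large enough, the exponential factor decays faster than any power of $t$ and dominates the polynomial volume term, so this escape mass drops below $1/2$ once $t$ exceeds some $t_0$.

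Combining the two ingredients, the in-ball mass is at least $1/2$, and with $\calV_r(x) \leq d\,(A\sqrt{t}\,\log t)^\theta$ we obtain
\begin{equation*}
 p_{2t}(x,x) \geq \frac{(1/2)^2}{d\,(A\sqrt{t}\,\log t)^\theta} = \frac{c}{(\sqrt{t}\,\log t)^\theta}
\end{equation*}
for all $x \in X$ and all $t > t_0$; replacing $2t$ by $t$ and relabelling the constants gives the stated inequality.

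The main obstacle is the escape estimate, and especially the requirement that it hold \emph{uniformly} in $x$. This forces two points of care. First, the Gaussian upper bound and the choice of $r(t)$ must be arranged so that $t_0$ and all constants are independent of the base point; this is exactly where the bounded geometry hypotheses ($\Deg$ bounded, $\inf_x m(x) > 0$) enter, since they make the Carne--Varopoulos prefactor uniform. Second, the polynomial volume bound must be available with an $x$-independent constant, that is, one needs polynomial volume growth uniformly over all base points rather than at a single one; this is the form in which the hypothesis is to be understood here. Finally, the scale $\sqrt{t}\,\log t$ is not accidental: it is precisely the smallest radius at which Gaussian decay at scale $\sqrt{t}$ overpowers the polynomial growth of the volume, which is the origin of the logarithmic correction in the statement.
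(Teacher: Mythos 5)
The paper does not prove this lemma at all: its ``proof'' is a one-line citation of Theorem 1.3 in \cite{wuOndiagonalLowerEstimate2018}. Your proposal instead reconstructs the underlying argument, and it is the right one --- the Coulhon--Grigor'yan scheme of \cite{coulhonOndiagonalLowerBounds1997} adapted to graphs, which is essentially what the cited reference carries out. The skeleton is sound: $p_{2t}(x,x)=\sum_y p_t(x,y)^2 m(y)$ plus Cauchy--Schwarz reduces everything to an upper bound on $\calV_r(x)$ and a lower bound on the in-ball mass; conservativeness holds here because a graph with bounded $\Deg$ has a bounded Laplacian and is therefore stochastically complete, so the total mass is indeed $1$; and the escape mass at radius $r=A\sqrt{t}\log t$ is killed by the Gaussian factor, since $\exp(-r^2/(C_1 t))=t^{-A^2\log t/C_1}$ beats any polynomial. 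Two points deserve more care than your sketch gives them. First, Carne--Varopoulos is a discrete-time statement; for the continuous-time kernel you need its analogue (Davies' large-deviation bounds for graphs), whose prefactor $1/\sqrt{m(x)m(y)}$ is where $\inf_x m(x)>0$ enters, and whose Gaussian regime is only $d(x,y)\lesssim t$ --- you acknowledge the Poissonian tail for $d(x,y)>t$ but it does need to be summed explicitly against the volume shells. Second, and more substantively, you are right that the argument requires $\calV_k(x)\le d\,k^\theta$ with $d$ and the threshold independent of $x$; the paper's Definition \ref{def_at-most-polynomial-volume-growth} only posits this at a single base point, and the triangle-inequality upgrade makes the threshold $x$-dependent, so a single $t_0$ valid for all $x\in X$ does not follow from the hypotheses as literally stated --- this is a (minor) mismatch between the lemma's phrasing and the uniform volume assumption under which the cited theorem, and your proof, actually operate. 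One small overstatement: $\sqrt{t}\log t$ is not the smallest admissible radius (e.g.\ $A\sqrt{t\log t}$ with $A$ large would already defeat the polynomial volume term and would give a slightly better on-diagonal bound), but this does not affect the validity of your argument for the inequality as stated.
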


\begin{proof}
 See Theorem 1.3 in \cite{wuOndiagonalLowerEstimate2018}.
\end{proof}

\begin{remark}
 In the case of manifolds, we did not need to make an assumption about the geometry being bounded and we were able to work with an unbounded operator.
 This raises the question whether a heat kernel estimate like the one given by Lemma \ref{thm_heatkernelondiagonallowerboundgraphs_wu} is obtainable under weaker assumptions.
\end{remark}

With these tools in hand, we can easily prove a non-existence result for the initial value problem \eqref{eq_ivp} on a graph that is analogous to the one obtained for manifolds.

\begin{theorem}
 \label{thm_corollary_non-existenceForGraphs}
 Let $b$ be a locally finite, connected graph over a discrete measure space $(X,m)$.
 Let $\inf \{m(x) \mid x \in X \} > 0$ and let the weighted degree function $\Deg$ be bounded.
 Furthermore, let the volume growth of the graph be at most polynomial of degree $\theta$.
 Assume that there are constants $\kappa, \gamma > 0$ such that the function $F$ defined by $F(t) = \int_t^\infty 1/f(s) \diff s$ satisfies $F(\textfrac{1}{t}) \leq \kappa \cdot t^\gamma$ for all sufficiently large $t$.
 Additionally, suppose that we have $\theta \gamma < 2$.
 Then for all nontrivial initial values $a \in \ell^p_+(X,m)$ the system
 \begin{equation*}
  \begin{cases}
   u' = L u + f(u) \\
   u(0) = a \\
   u \geq 0
  \end{cases}
  ,
 \end{equation*}
 has no global $\lp$-valued mild solution.
\end{theorem}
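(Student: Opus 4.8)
The plan is to follow the template of the proof of Theorem~\ref{thm_corollary_non-existenceForManifolds}, but to take advantage of the discreteness of $X$ so that only an on-diagonal heat kernel bound is needed. Since $a$ is nontrivial and nonnegative, I would fix a vertex $x_0 \in X$ with $a(x_0) > 0$. The goal is then to exhibit a single time $T$ for which the hypothesis of Corollary~\ref{thm_noglobalsolution_measurable_pointwise} holds on the singleton $G = \{x_0\}$; this is an admissible set because $m(G) = m(x_0) \geq \inf_{x \in X} m(x) > 0$.

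Two ingredients feed into this. First, exactly as in the manifold case, the assumption $F(1/t) \leq \kappa t^\gamma$ for large $t$ translates, via the substitution used there and the fact that $F$ and $F\inv$ are strictly decreasing, into the bound $F\inv(t) \leq (\kappa/t)^{1/\gamma}$ for all sufficiently large $t$. Second, I would extract a lower bound for the semigroup at $x_0$ directly from the kernel representation: keeping only the diagonal term, for $t > t_0$ we have
\[
 e^{-tL}a(x_0) = \sum_{y \in X} p_t(x_0,y)\,a(y)\,m(y) \geq p_t(x_0,x_0)\,a(x_0)\,m(x_0),
\]
and Lemma~\ref{thm_heatkernelondiagonallowerboundgraphs_wu} then gives $e^{-tL}a(x_0) \geq c'/(\sqrt{t}\,\log t)^\theta$ with $c' := c\,a(x_0)\,m(x_0) > 0$.

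It remains to find a large $t$ with $c'/(\sqrt{t}\,\log t)^\theta > (\kappa/t)^{1/\gamma}$; for such a $t$ (also chosen larger than $t_0$) the two displayed estimates combine to $e^{-tL}a(x_0) > F\inv(t)$, and Corollary~\ref{thm_noglobalsolution_measurable_pointwise} with $G = \{x_0\}$ finishes the proof. After multiplying through by $t^{1/\gamma}$, the desired inequality is equivalent to $t^{1/\gamma - \theta/2}(\log t)^{-\theta} > \kappa^{1/\gamma}/c'$. This is where the hypothesis $\theta\gamma < 2$ is used: it forces $1/\gamma - \theta/2 > 0$, so the genuine power of $t$ tends to infinity and the left-hand side is unbounded.

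I expect the only real subtlety to be this last comparison. The graph estimate from Lemma~\ref{thm_heatkernelondiagonallowerboundgraphs_wu} carries the extra factor $(\log t)^\theta$, which is absent in the manifold setting, so one must check that this logarithmic loss is harmless; the strict inequality $\theta\gamma < 2$ produces a true power gap $t^{1/\gamma - \theta/2}$ that dominates $(\log t)^{-\theta}$, and the argument survives. Otherwise the discreteness makes the proof \emph{simpler} than the manifold case: since singletons have positive measure, the on-diagonal bound alone suffices, and neither an integration of the kernel against $a$ over all of $X$ nor a Fatou argument is required.
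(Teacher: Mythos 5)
Your proposal is correct and follows essentially the same route as the paper: pick a vertex $x_0$ with $a(x_0)>0$, drop to the diagonal term in the kernel sum, invoke Lemma~\ref{thm_heatkernelondiagonallowerboundgraphs_wu}, and use $\theta\gamma<2$ to beat $(\kappa/t)^{1/\gamma}\geq F\inv(t)$ for large $t$. The only difference is that you spell out explicitly why the power gap $t^{1/\gamma-\theta/2}$ absorbs the $(\log t)^{-\theta}$ factor, a step the paper leaves implicit.
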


\begin{proof}
 The basic strategy is the same as in the proof of Theorem \ref{thm_corollary_non-existenceForManifolds}.
 
 By assumption, there exists $x \in X$ with $a(x) > 0$.
 By Lemma \ref{thm_heatkernelondiagonallowerboundgraphs_wu}, we have
  \[ e^{-tL}a (x) = \sum_{y \in X} p_t(x,y) a(y) m(y) \geq p_t(x,x) a(x) m(x) \geq \frac{c'}{(\sqrt{t} \log t)^\theta} . \]
 for sufficiently large values of $t$.
 Moreover, since $\theta \gamma < 2$, the inequality $\textfrac{c'}{(\sqrt{t} \log t)^\theta} > (\kappa/t)^{1/\gamma}$ holds for all large enough $t$, which implies the validity of $e^{-tL}a(x) > F\inv(t)$ for some $t$ because of the assumption regarding $F(1/\cdot)$.
\end{proof}

\begin{remarks}
 \begin{remarkscontent}
 {This result contains Theorem 1.1 from \cite{linBlowupProblemsNonlinear2018} as a special case.
 There, the condition $\theta \gamma < 1$ is required due to the use of a slightly weaker lower bound on the heat kernel.
 If $f(t) = t^{1 + \alpha}$ for $\alpha > 0$ and all $t \geq 0$, then we obtain Theorem 1.5 from \cite{wuOndiagonalLowerEstimate2018}.}
 
 \item
 As noted below Lemma \ref{thm_heatkernelondiagonallowerboundgraphs_wu}, the conditions of our blow-up result for graphs seem somewhat restrictive compared to the manifold case.
 With the prospect of loosening these restrictions in mind, let us emphasize what the proof of Theorem \ref{thm_corollary_non-existenceForGraphs} really uses: The existence of a $t > 0$ with
  \[ p_t(x,x) a(x) m(x) > (\kappa / t)^{1 / \gamma} \geq F\inv(t) , \]
 where the second inequality holds for all sufficiently large values of $t$ by assumption.
 In the setting of Theorem \ref{thm_corollary_non-existenceForGraphs}, the existence of such a $t$ is guaranteed by the fact that Lemma \ref{thm_heatkernelondiagonallowerboundgraphs_wu} implies
  \[ \limsup_{t \to \infty} \, p_t(x,x) t^{1/\gamma} = \infty . \]
  
 \end{remarkscontent} 
\end{remarks}

\subsection{Laplacians on metric measure spaces with a heat kernel}

In the preceding two subsections, we applied the non-existence results from Section \ref{sec_nonexistence-continuous} to Laplacians on manifolds and graphs.
In both cases, results concerning heat kernel estimates were instrumental in establishing the required relationship between the reaction and the diffusion.
Adopting an axiomatic approach to heat kernels, we can easily extend the application of our non-existence results to the more general class of metric measure spaces.
This is discussed in the present subsection.
Such an axiomatic approach was introduced in \cite{barlowDiffusionsFractals1998} to study diffusions on fractals and later used in
\cite{falconerNonlinearDiffusionEquations2001},
\cite{falconerInhomogeneousParabolicEquations2012},
and \cite{gorkaParabolicFlowMetric2014} to study semilinear heat equations on fractals and metric measure spaces.

Let $(X,d,m)$ be a metric measure space, i.e.\@ let $(X,d)$ be a non-empty metric space and $m$ a $\sigma$-finite measure on the Borel sets of $X$.
Let $p \in [1,\infty)$ and let $(p_t)_{t \geq 0}$ be a family of measurable functions $X \times X \to [0,\infty)$ such that the following axioms are satisfied.
\begin{enumerate}[(p1)]
 \item $\int_X p_t(x,y) \diff m(y) \leq 1$ for all $t > 0$, $x \in X$.
 \item $p_t(x,y) = p_t(y,x)$ for all $t > 0$ and $x,y \in X$.
 \item $p_{t+s}(x,y) = \int_X p_t(x,z) p_s(z,y) \diff m(z)$ for all $t,s > 0$ and $x,y \in X$.
 \item For every $\phi \in \Lp$, $\int_X p_t(\cdot,y) \phi(y) \diff m(y) \to \phi$ in $\Lp$ as $t \to 0+$.
 \setcounter{ContinueCounter}{\value{enumi}}
\end{enumerate}
Then the definition
 \[ S(t) \phi (x) := \int_X p_t(x,y) \phi(y) \diff m(y) \]
yields a strongly continuous sub-Markovian semigroup $S$ of bounded linear operators on $\Lp$.
The infinitesimal generator of this semigroup, denoted $L$, may be considered a Laplacian on the metric measure space $X$.
Furthermore, we assume the following lower bound on the heat kernel.
\begin{enumerate}[(p1)]
 \setcounter{enumi}{\value{ContinueCounter}}
 \item There exist constants $\alpha, \beta > 0$ and a nonnegative function $\Phi$ on $[0,\infty)$ such that 
        \[ p_t(x,y) \geq \frac{1}{t^{\alpha/\beta}} \Phi\left(\frac{d(x,y)}{t^{1/\beta}}\right) \]
       for all $t > 0$ and $x,y \in X$. 
\end{enumerate}

\begin{remark}
 The choice of the lower bound in (p5) follows the form of the two-sided estimates for the heat kernel used in \cite{grigoryanHeatKernelsMetric2003}.
 In that article, under the additional assumption that the heat kernel is also bounded from above by an expression of the form $t^{- \alpha / \beta} \Psi(d(x,y) t^{- 1/\beta})$ where $\Psi$ is a nonnegative function on $[0,\infty)$ decaying sufficiently fast at infinity, it is shown that $\alpha$ is the Hausdorff dimension of $X$.
 The parameter $\beta$, on the other hand, is called the walk dimension of the heat kernel $(p_t)$, for reasons explained in \cite{grigoryanHeatKernelsMetric2003}.
\end{remark}

Now, if $\Phi(t)$ is bounded away from 0 for $t$ near 0, then we may apply Corollary \ref{thm_noglobalsolution_measurable_pointwise} just as in the setting of manifolds.

\begin{theorem}
 \label{thm_corollary_non-existenceForMetricMeasureSpaces}
 Let $(X,d,m)$ be a metric measure space with a heat kernel satisfying (p1)-(p5).
 Additionally, let $\liminf_{t \to 0+} \Phi(t) > 0$.
 Assume that there are constants $\kappa, \gamma > 0$ such that the function $F$ defined by $F(t) = \int_t^\infty 1/f(s) \diff s$ satisfies $F(\textfrac{1}{t}) \leq \kappa \cdot t^\gamma$ for all sufficiently large $t$.
 Suppose further that $\alpha \gamma <  \beta$.
 Then for all nontrivial initial values $a \in \Lppos$ the system
  \begin{equation*}
  \begin{cases}
   u' = L u + f(u) \\
   u(0) = a \\
   u \geq 0
  \end{cases}
  ,
  \end{equation*}
 has no global $\Lp$-valued mild solution.
\end{theorem}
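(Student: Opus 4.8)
The plan is to follow the same scheme as in the proofs of Theorems \ref{thm_corollary_non-existenceForManifolds} and \ref{thm_corollary_non-existenceForGraphs}: produce a measurable set $G$ with $m(G) > 0$ and a time $T > 0$ for which $(S(T)a)(x) > F\inv(T)$ holds for $m$-almost every $x \in G$, and then invoke Corollary \ref{thm_noglobalsolution_measurable_pointwise}. The new feature, compared with the manifold case, is that we no longer assume the continuity property (C), so a lower bound at a single point cannot be spread to a neighbourhood. I would instead obtain a lower bound on $S(t)a$ that is \emph{uniform} over a suitable positive-measure set directly from (p5).

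First I would extract usable constants from the hypotheses on $\Phi$ and $a$. Since $\liminf_{t \to 0+}\Phi(t) > 0$, there are $c > 0$ and $\delta > 0$ with $\Phi(s) \geq c$ for all $s \in [0,\delta]$. Because $a$ is nontrivial and nonnegative, the set $\{a \geq \epsilon\}$ has positive measure for some $\epsilon > 0$; as $a \in \Lp$, this set also has finite measure, and intersecting it with a large enough ball $B_R(x_0)$ (the balls exhaust $X$, so monotone convergence applies) yields a \emph{bounded} set $G$ of diameter at most $2R$ with $0 < m(G) < \infty$ and $a \geq \epsilon$ on $G$. For $x \in G$ the inclusion $G \subseteq B_{2R}(x) \subseteq B_{\delta t^{1/\beta}}(x)$ holds once $t$ is large, so (p5) gives
\[ (S(t)a)(x) \geq \int_{B_{\delta t^{1/\beta}}(x)} \frac{1}{t^{\alpha/\beta}} \Phi\ropar*{\frac{d(x,y)}{t^{1/\beta}}} a(y) \diff m(y) \geq \frac{c\, \epsilon\, m(G)}{t^{\alpha/\beta}} =: \frac{C}{t^{\alpha/\beta}}, \]
where $C > 0$ is independent of $x \in G$, for all sufficiently large $t$.

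It then remains to compare this lower bound with $F\inv(t)$. From $F(\textfrac{1}{t}) \leq \kappa t^\gamma$ for large $t$, the monotonicity and bijectivity of $F$ recorded in Lemma \ref{notthm_fAndFAndOneOverf_properties} give $F\inv(t) \leq (\kappa/t)^{1/\gamma}$ for large $t$, exactly as in the manifold proof. Hence it suffices to find $T$ with $C\, T^{-\alpha/\beta} > (\kappa/T)^{1/\gamma}$, that is, $T^{\alpha/\beta - 1/\gamma} < C/\kappa^{1/\gamma}$. The assumption $\alpha\gamma < \beta$ is precisely what makes the exponent $\alpha/\beta - 1/\gamma$ negative, so this inequality holds for all sufficiently large $T$; choosing such a $T$ that is also large enough for the uniform bound above to apply, we obtain $(S(T)a)(x) > F\inv(T)$ for every $x \in G$, and Corollary \ref{thm_noglobalsolution_measurable_pointwise} finishes the proof.

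I expect the main obstacle to be the uniformity issue flagged above: since Corollary \ref{thm_noglobalsolution_measurable_pointwise} requires the strict inequality on a positive-measure set at one fixed time, a merely pointwise and $x$-dependent threshold would not suffice, and without (C) one cannot appeal to continuity to pass from a point to a neighbourhood. Confining attention to a bounded set $G$ on which $a$ is bounded below is what turns the pointwise estimate into the required uniform one. Some minor care is also needed because $a$ need not lie in $L^1$, but this only helps, as the lower bound can then only improve.
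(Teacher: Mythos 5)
Your proof is correct and follows essentially the same route as the paper, which for this theorem simply states that the argument is adapted from the manifold case: establish a lower bound of the form $C\,t^{-\alpha/\beta}$ for $S(t)a$ on a set of positive measure using (p5) and the behaviour of $\Phi$ near $0$, compare exponents using $\alpha\gamma<\beta$ and $F^{-1}(t)\leq(\kappa/t)^{1/\gamma}$, and invoke Corollary \ref{thm_noglobalsolution_measurable_pointwise}. Your device of working on a bounded set $G\subseteq\{a\geq\epsilon\}$ of positive finite measure is a clean way to obtain the uniformity in $x$ that the paper's Fatou-type argument in the manifold proof glosses over, but it is an implementation detail rather than a different method.
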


\begin{proof}
 The proof can easily be adapted from the proof of Theorem \ref{thm_corollary_non-existenceForManifolds}.
\end{proof}

\begin{remarks}
\begin{remarkscontent}{%
 The axiomatic approach to heat kernels on metric measure spaces discussed above is typically used when working with fractals.
 For example, in \cite{barlowBrownianMotionSierpinski1988} and \cite{barlowBrownianMotionHarmonic1999} such kernels are obtained as transition densities of diffusion processes on the Sierpinski triangle and the Sierpinski gasket, respectively.
 }
 
 \item
 To the best of our knowledge, Theorem \ref{thm_corollary_non-existenceForMetricMeasureSpaces} is new.
 It includes, but is not limited to, the case where $f(t) \geq c t^{1+\alpha}$ for some constant $c > 0$ and $\alpha > 0$.
 In particular, it is a generalization of Theorem 2.2 in \cite{falconerNonlinearDiffusionEquations2001}, where $f$ is assumed to be a power function, except for the fact that that result includes the case which in our notation would be given by $\alpha \gamma = \beta$.
  
 \item
 Theorem \ref{thm_corollary_non-existenceForMetricMeasureSpaces} also covers the case where $L$ is a fractional Laplacian on $\IR^N$.
 Indeed, it is well-known that for $\beta \in (0,2)$ the heat kernel associated with the fractional Laplacian $-(-\Delta)^{\beta / 2}$ satisfies a lower bound of the form (p5) with $\alpha = N$ and $\Phi(t) = c (1 + t^2)^{- (N + \beta)/2}$, where $c$ is a positive constant.
 A detailed treatment of this setting can be found in \cite{laisterBlowupDichotomySemilinear2020}, where a condition that is both sufficient and necessary for blow-up is given under an additional technical assumption on the function $f$.
 
 \item
 The remark (\ref{rem_other-growth-laws}) made after the proof of Theorem \ref{thm_corollary_non-existenceForManifolds} concerning the possibility of working with different growth behaviours for the diffusion and the reaction also applies here.
 We can, for example, alter the strength of the diffusion by adjusting the lower bound for the heat kernel in property (p5).
\end{remarkscontent}
\end{remarks}

\bibliographystyle{alpha}
\bibliography{My-bibtex-Library.bib}

\end{document}